\newtheorem{theorem}{Theorem}
\newtheorem{lemma}{Lemma}
\newtheorem{corollary}{Corollary}
\theoremstyle{definition}
\newtheorem{definition}{Definition}
\newtheorem{remark}{Remark}
\newtheorem*{construction}{Construction}
\newtheorem{fact}{Fact}
\newcommand{\C}{Z}
\newcommand{\N}{{\mathbb N}}
\newcommand{\occ}[2]{|#1|_{#2}}
\newcommand{\bad}{Bad}
\begin{document}

\title{A construction of a $\lambda$-Poisson generic sequence}

\author{Ver\'onica Becher  and Gabriel Sac Himelfarb }

\maketitle

\begin{abstract}
Years ago Zeev Rudnick defined the $\lambda$-Poisson generic sequences as the infinite sequences of symbols in a finite alphabet where the number of occurrences of long words in the initial segments follow the Poisson distribution with parameter $\lambda$. 
Although  almost all sequences, with respect to the uniform measure, are Poisson generic, no explicit instance has yet been given.
In this note we give
 a construction of an explicit $\lambda$-Poisson generic sequence
 over any alphabet and any positive $\lambda$, except for the case of the two-symbol alphabet, in which it is required that $\lambda$ be less than or equal to the  natural logarithm of~$2$.
Since $\lambda$-Poisson genericity implies Borel normality,
the constructed sequences are  Borel normal.
The same construction provides explicit instances of Borel normal sequences that are not $\lambda$-Poisson generic.
\end{abstract}

Keywords: Poisson generic, normal numbers, de Bruijn sequence
\smallskip

{\bf MSC Classification}: 11K16,05A05, 60G55.



\section{Introduction and Statement of Results}

A real number is {\em  Poisson generic} to an integer base $b$ greater than or equal to~$2$ if  the number of occurrences of long blocks in the initial segments of its  base-$b$ expansion follow the Poisson distribution.
The definition was given years ago by   Zeev Rudnick \cite{abm,weiss2020}, who thought of it as  a property stronger than Borel normality that still holds for almost all real numbers with respect to the Lebesgue measure.\footnote{He called the notion supernormality. Personal communication from Z. Rudnick to V. Becher, 24 May 2017.} He was motivated  his result in~\cite{Rudnick} that in almost all dilates of lacunary sequences the number of elements in a random interval of the size of the mean spacing follows the Poisson law. Rudnick asked for an explicit instance of a Poisson generic real number.

Since we consider fractional expansions of real numbers in a fixed integer base, the definition of Poisson genericity  can be given for infinite sequences of symbols in a finite alphabet.
We write  $\mathbb{N}_0$ for  the set of non-negative integers, 
and $\mathbb{N}$ for the set of positive integers.
Let  $\Omega$ be an alphabet of $b$ symbols, for~$b\geq 2$.
We write  $\Omega^{\mathbb N}$  for the  set of infinite sequences of symbols in~$\Omega$.
The finite sequences of symbols in~$\Omega$ are called words and~ $\Omega^k$ denotes the set of words of length~$k$. 

We number the positions in words and  infinite sequences starting at~$1$ and we write $w[i...j]$ for the subsequence of~$w$  that begins in position~$i$ and ends in position $j$. 
For a word~$w$ we denote its length as~$|w|$. 
Given two words $w$ and $v$, the number of occurrences of $v$ in $w$ is:
    
\centerline{$
    |v|_w=\#\{1\leq i\leq |v|-|w|+1 : v[i...i+|w|-1]=w\}.
    $}
    
\noindent
For example, $|0001|_{00}=2$.

For  $x\in\Omega^{\mathbb N}$, a positive real number $\lambda$,  $i\in\mathbb N_0$ and $k\in\mathbb N$ we write $\C^\lambda_{i,k}(x)$ for the proportion of words of length $k$ that occur exactly $i$ times in $x[1..\lfloor\lambda b^k\rfloor+k-1]$,
\[
  \C^\lambda_{i,k}(x) = \frac{\#\{w \in \Omega^k:
    |x[1...\lfloor\lambda b^k\rfloor+k-1]|_{w} = i\}}{b^k}.
\]  

\begin{definition}
Let $\lambda$ be a positive real number. A sequence $x\in\Omega^{\mathbb N}$ is $\lambda$-Poisson generic if  for every  $i\in \mathbb{N}_0$,
\[
\lim_{k\rightarrow\infty}   \C^\lambda_{i,k}(x)
= e^{-\lambda} \frac{\lambda^i}{i!}.
\]
A sequence is  Poisson generic if it is  $\lambda$-Poisson generic
 for all positive real numbers~$\lambda$.
\end{definition}

The $\lambda$-Poisson generic property  can be thought of 
in terms of  random allocations of balls in  bins, where the $N=\lfloor \lambda b^k\rfloor$ initial words of length $k$ of a random sequence are the  balls, and the $b^k$ possible words of length $k$ are the  bins. These allocations are almost independent: it can be 
checked that the probability that two words in $\Omega^k$ picked uniformly at random appear in fixed overlapping positions is exactly $b^{-2k}$, as if they were independent. 
The occupancy of a random bin 
satisfies a Poisson law in the limit, the proof can be read from~\cite[Example III.10]{FS}.

Benjamin Weiss and Yuval Peres~\cite{weiss2020} proved that almost all sequences with respect to the uniform measure\footnote{The uniform measure over $\Omega^\mathbb{N}$ is the infinite product of the uniform measure over the alphabet $\Omega$. The uniform measure on $\Omega^\mathbb{N}$ coincides with the Lebesgue measure when we identify the real numbers with their fractional expansions in each given integer base.} are Poisson generic.
In fact, they proved the following stronger result: 
Consider the finite probability spaces $\Omega^k$, $k\in \mathbb{N}$, with the uniform probability measure~$\mu^k$. Fix $x\in \Omega^{\mathbb{N}}$, and define on these spaces, for each bounded Borel set $S\subset \mathbb{R}^+$, the  integer valued random variable $M_k^x(S)$ in the following way: $M_k^x(S)(\omega)$ counts 
how many times the word $\omega$ occurs in $x$  at a position in  the set $\N\cap \{b^k s: s\in S\}$.
Then, for almost all $x$ with respect to the uniform measure, $M_k^x(\cdot)$ converges in distribution to the  Poisson point process in the  positive real line as $k$ converges to infinity, see also~\cite[Theorem~1]{abm}.  Since  
\[
\C_{i,k}^{\lambda}(x)=
\mu^k\left(\{\omega\in\Omega^k: M^{x}_k((0,\lambda])(\omega)=i\}\right)
\]
it follows that  almost all $x\in \Omega^{\mathbb{N}}$ with respect to the uniform measure are Poisson generic.
Despite this result, no explicit example has yet been given.
The following is the main result of this note and its corollary gives 
 a construction of an explicit $\lambda$-Poisson generic sequence
 over any alphabet and any positive $\lambda$, except for the case of the two-symbol alphabet, in which it is required that $\lambda$ be less than or equal to the natural logarithm of~$2$.

\begin{theorem}\label{thm:main}
Let $\lambda $ be a positive real number and $\Omega$ a $b$-symbol alphabet. Let $(p_i)_{i\in \mathbb N_0}$ be a 
sequence of non-negative real numbers such that 
$\sum\limits_{i\geq 0}p_i=1$ and
$\sum\limits_{i\geq 0}ip_i=\lambda$,
and let $p_0$ be greater than or equal to $1/2$ if $b=2$.
Then, there is  a construction of an infinite sequence~$x$ over alphabet~$\Omega $, which satisfies for every $i\in\mathbb{N}_0$,
\[
\lim_{k\rightarrow\infty} 
\C^\lambda_{i,k}(x)
= p_i.
\]
\end{theorem}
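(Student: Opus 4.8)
The plan is to recast the statement as an Eulerian-circuit problem in de Bruijn graphs and then to assemble the per-scale solutions into a single sequence by nesting. First I would fix the bookkeeping. A word of length $L$ has exactly $L-k+1$ factors of length $k$, so choosing a scale-$k$ word of length $\lfloor\lambda b^k\rfloor+k-1$ makes its number of length-$k$ factors equal to $N=\lfloor\lambda b^k\rfloor$, which is exactly the window appearing in the definition of $\C^\lambda_{i,k}$. I would pick integer targets $n_i^{(k)}$ with $\sum_i n_i^{(k)}=b^k$ and $\sum_i i\,n_i^{(k)}=\lfloor\lambda b^k\rfloor$ and $n_i^{(k)}/b^k\to p_i$; these are compatible precisely because $\sum_i p_i=1$ and $\sum_i i\,p_i=\lambda$. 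Now constructing a word whose length-$k$ factors have prescribed multiplicities $f\colon\Omega^k\to\N_0$ is the same as finding a closed walk in the de Bruijn graph $G_k$ (vertices $\Omega^{k-1}$, edges $\Omega^k$, the edge $w=w_1\cdots w_k$ joining $w_1\cdots w_{k-1}$ to $w_2\cdots w_k$) that traverses edge $w$ exactly $f(w)$ times. By Euler's theorem for directed multigraphs, such a walk exists iff the multigraph determined by $f$ is connected on its support and balanced, i.e. $\sum_{a\in\Omega}f(va)=\sum_{a\in\Omega}f(av)$ for every $v\in\Omega^{k-1}$.

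The core of the argument is therefore a realizability lemma: for each $k$ one must decide \emph{which} words receive which multiplicity so as to match the profile $n_i^{(k)}$ while keeping the resulting multigraph balanced and connected. The key building block is that an order-$(k-1)$ de Bruijn sequence traverses, exactly once each, precisely the $b^{k-1}$ length-$k$ words that occur as its factors, that is a $1/b$ fraction of all edges of $G_k$, and it does so as a single cycle; for $b=2$ this is exactly half of the edges. I would use such structures as a backbone, inflating multiplicities to values $i\ge 2$ by splicing in sub-cycles along the walk (which preserves balance), and, for $b\ge 3$, enlarging the backbone up to the full order-$k$ de Bruijn sequence when a larger support is needed. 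In this picture the hypothesis $p_0\ge 1/2$ for $b=2$ is exactly the statement that the active (positively used) edges, a fraction $1-p_0$ of the edge set, fit inside the half of $G_k$ that the binary structure allows one to switch on while still closing up into a balanced, connected walk; for $b\ge 3$ the graph is rich enough that every profile is realizable and no restriction on $p_0$ is needed.

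The second half of the plan is assembly. I would construct words $W_k$ with $|W_k|=\lfloor\lambda b^k\rfloor+k-1$ realizing the scale-$k$ profile $n_i^{(k)}$, and arrange them so that $W_k$ is a prefix of $W_{k+1}$, a prescribed-profile analogue of the nested perfect necklaces used in explicit constructions of normal sequences; the sequence $x$ is then the common limit of the $W_k$. With exact prefix-compatibility the scale-$k$ window prefix $x[1\ldots\lfloor\lambda b^k\rfloor+k-1]$ is literally $W_k$, so $\C^\lambda_{i,k}(x)=n_i^{(k)}/b^k\to p_i$ for every $i$, which is the conclusion. The infinitely many nonzero $p_i$ are handled by truncating the target profile at a cutoff that grows slowly with $k$; this is harmless since $\sum_i i\,p_i=\lambda<\infty$ forces the tail $\sum_{i>T}i\,p_i$ to vanish.

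The routine limit computation in the last step is immediate once the construction is in place; the \emph{main obstacle} is the realizability lemma together with making it prefix-compatible across scales. Concretely, the hard parts are (i) proving that a balanced, connected multigraph on $G_k$ with the prescribed value-distribution $n_i^{(k)}$ exists, and that for $b=2$ the threshold $p_0\ge 1/2$ is exactly what permits this, and (ii) extending a fixed scale-$k$ walk to a scale-$k+1$ walk realizing the scale-$(k+1)$ profile \emph{without disturbing the already-committed prefix}. Both are genuinely combinatorial constraints of the de Bruijn graph rather than estimates, and I expect the binary case of (i) to be where the real difficulty and the appearance of $\ln 2$ (via $p_0=e^{-\lambda}\ge 1/2$ for the Poisson profile) are concentrated.
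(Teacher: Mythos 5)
Your plan is a sketch whose two central steps---the realizability lemma (i) and the prefix-compatible extension (ii)---are exactly where the mathematical content lies, and you leave both open, so this is not yet a proof. Step (ii) is moreover \emph{stronger} than what the paper needs, and your sketch contains no mechanism to achieve it: if $W_k$ is literally a prefix of $W_{k+1}$, then $W_k$ commits roughly $\lambda b^k$ of the length-$(k+1)$ factors of $W_{k+1}$, and the scale-$(k+1)$ multiplicity profile of those committed factors depends on \emph{which} Eulerian walk you chose at scale $k$; an arbitrary profile-realizing walk gives you no control over its statistics at the next order, so the induction does not close. This is precisely the difficulty the paper's construction engineers away: all building blocks are segments of a single infinite de Bruijn sequence $A$ (Lemma~\ref{lemma:infinite}), whose prefix $A_k=A[1...b^k]$ is a cyclic de Bruijn word of order $k$ for every $k$ simultaneously, so a constituent segment of relative length $b^{-j}$ in $A_k$, repeated $i$ times, contributes coherently at every later scale; this yields the recursion $p_i^{k+1}=\frac{1}{b}p_i^k+\left\{\frac{b-1}{b}p_i\right\}_{g(k+1)}$ (Lemma~\ref{lemma:sum}). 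The paper also abandons your exactness: it never realizes integer profiles exactly, but tolerates truncation error $k/b^{g(k)}$ (Lemma~\ref{lemma:convergence}), a length mismatch that vanishes in relative terms (Lemma~\ref{lemma:length}), and an error of order $kB_k$ from words straddling run-segment boundaries (Lemma~\ref{lemma:blocks}). Your clean identity $\C^{\lambda}_{i,k}(x)=n_i^{(k)}/b^k$ would make the limit trivial, but nothing in your outline shows exactly nested $W_k$ with exact profiles exist, and that existence is the whole theorem.

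Two further points. First, your explanation of the binary hypothesis $p_0\geq 1/2$ is a conjecture (``I expect\dots'') and, as far as one can tell, not the right one: in the paper the threshold does not come from Eulerian realizability but from Lemma~\ref{lemma:infinite}(2)---binary de Bruijn words extend only by two orders at a time, so the infinite binary de Bruijn sequence is de Bruijn only along odd orders, and to guarantee that distinct constituent segments share no word of length $k$ at even $k$ the construction must confine everything chosen through step $k$ to $A[1...2^{k-1}]$; the condition $p_0\geq 1/2$, i.e.\ used proportion $1-p_0\leq 1/2$, is exactly what makes this possible (Lemma~\ref{lemma:base2}). Second, a small but real slip in your graph-theoretic setup: a word realizes a factor-multiplicity function via an Eulerian \emph{walk} from the vertex of its length-$(k-1)$ prefix to that of its suffix, not a closed walk, so the correct condition is connectivity plus near-balance (two vertices of defect one), not the exact balance you state; this matters when you try to splice sub-cycles and still control the endpoints.
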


By taking $p_i=e^{-\lambda} \lambda^{i}/i!$ we obtain the promised result.   In the sequel we write $\ln$ for the natural logarithm, namely, the logarithm in base $e$.

\begin{corollary}\label{cor:main}
Let  $\Omega $ be  $b$-symbol alphabet.  In case  $b=2$, fix a positive real number $\lambda$ less than  or equal to $ \ln(2)$; otherwise fix any positive real number $\lambda$.
Then, there is a construction of  a $\lambda$-Poisson generic sequence  $x\in \Omega^\mathbb N$.
\end{corollary}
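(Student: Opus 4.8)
The plan is to obtain the corollary as an immediate specialization of Theorem~\ref{thm:main}, taking for $(p_i)_{i\in\mathbb{N}_0}$ the Poisson distribution of parameter $\lambda$. Concretely, I would set $p_i = e^{-\lambda}\lambda^i/i!$ for each $i\in\mathbb{N}_0$ and then check that this sequence meets the three hypotheses of the theorem.

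First I would verify the two unconditional requirements, which hold for every positive $\lambda$ and every $b$. The normalization $\sum_{i\geq 0}p_i = 1$ is the series $e^{-\lambda}\sum_{i\geq 0}\lambda^i/i! = e^{-\lambda}e^{\lambda}$. The mean condition $\sum_{i\geq 0}ip_i = \lambda$ follows from $\sum_{i\geq 1}i\lambda^i/i! = \lambda\sum_{i\geq 1}\lambda^{i-1}/(i-1)! = \lambda e^{\lambda}$, multiplied by $e^{-\lambda}$. Thus the Poisson law has mean equal to its parameter, which is exactly what the theorem demands.

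The only place where the alphabet enters is the extra condition $p_0\geq 1/2$ imposed when $b=2$. Since $p_0 = e^{-\lambda}$, this reads $e^{-\lambda}\geq 1/2$, equivalently $\lambda\leq\ln 2$, which is precisely the restriction placed on $\lambda$ in the binary case. For $b\geq 3$ the theorem imposes no constraint on $p_0$, so no upper bound on $\lambda$ is needed. With all hypotheses checked, Theorem~\ref{thm:main} yields a constructed sequence $x\in\Omega^{\mathbb{N}}$ with $\lim_{k\to\infty}\C^\lambda_{i,k}(x) = e^{-\lambda}\lambda^i/i!$ for every $i$, which is exactly the definition of $x$ being $\lambda$-Poisson generic.

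I expect no genuine obstacle in this derivation: the entire difficulty lies in the construction underlying Theorem~\ref{thm:main}. The only point deserving attention is recognizing that matching $p_0 = e^{-\lambda}$ against the threshold $1/2$ is what forces the bound $\lambda\leq\ln 2$ in the two-symbol case, so that the corollary inherits exactly the same alphabet-dependent restriction as the theorem.
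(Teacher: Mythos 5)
Your proposal is correct and coincides with the paper's own (very brief) argument: the authors also obtain the corollary by taking $p_i = e^{-\lambda}\lambda^i/i!$ in Theorem~\ref{thm:main}, with the binary restriction $\lambda\leq\ln 2$ arising exactly as you say, from $p_0=e^{-\lambda}\geq 1/2$. Your write-up simply makes explicit the routine verifications (normalization and mean of the Poisson law) that the paper leaves implicit.
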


To prove  Theorem~\ref{thm:main} we give a construction that consists in concatenating segments 
of any infinite de Bruijn sequence (see Definition~\ref{def:db}),
which is a sequence that satisfies that each initial segment of length 
$b^k$ is a cyclic de Bruijn word of order~$k$~\cite[Theorem~1]{BecherHeiber},. 
Our construction works by  selecting segments of this given sequence 
and  repeating them  as many times as determined 
by the probabilities~$p_i$, for every~$i\in\mathbb {N}_0$.

\begin{remark}
An infinite sequence $x=a_1a_2\ldots$ of symbols in a given alphabet  is computable  exactly when the map $k\mapsto a_k$  is computable.
Since the set of computable sequences is countable, it has uniform measure~$0$, so the existence of computable Poisson generic sequences does not necessarily follow from the fact that the set of Poisson generic sequences has full measure.
In~\cite[Theorem~2]{abm} it is shown that there exist countably many Poisson generic computable sequences.
Theorem~\ref{thm:main} yields an explicit computable instance  whenever  $(p_i)_{i\in \mathbb N}$ is a computable sequence of real numbers, which means that the map $(i,n)\mapsto $ the $n$-th digit in the base-$b$ expansion of~$p_i$, is computable.
\end{remark}

For the next result we consider  Borel's definition of  normality for sequences of symbols in a given alphabet.
An introduction to the theory of normal numbers can be read 
from~\cite{kuipers,bugeaud}.

\begin{definition}
Let $\Omega$ be a $b$-symbol alphabet, $b\geq 2$. A  sequence $x\in\Omega^{\N}$ is Borel  normal  if every word $w$ occurs  in $x$ with the same limiting frequency as every other word of the same length,
\[
\lim_{n\to \infty}
\frac{|x[1..n]|_w}{n}=b^{-|w|}.
\]
\end{definition}

In~\cite{weiss2020} Weiss  showed that $1$-Poisson genericity implies Borel normality and that the two notions do not coincide, witnessed by the famous Champernowne sequence\footnote{Bejamin Weiss first presented  this proof  at the Institute for Advanced Study, Princeton University USA on June 16, 2010, as part of  his  conference on ``Random-like behavior in deterministic systems''.
It is available at {\url{https://www.youtube.com/watch?v=8AB7591De68&ab_channel=InstituteforAdvancedStudy}}.
It was transcribed and completed in~\cite{tllucas}.}
It is immediate to see that the infinite de Bruijn sequences (see Definition~\ref{def:db}) are not $1$-Poisson generic either.
In Theorem~\ref{thm:normCriterion} we present a Borel normality criterion that generalizes this fact. 
In contrast to Theorem~\ref{thm:main}, 
this result has no limitations in the case of the two-symbol alphabet.

\begin{theorem}\label{thm:normCriterion}
     Let $\Omega$ be a $b$-symbol alphabet, $b\geq 2$, and let $x\in\Omega^{\mathbb{N}}$. We fix a positive real number $\lambda$ and define for every $i\in \mathbb{N}_0$ the numbers $p_i=\liminf_{k\rightarrow\infty}\C_{i,k}^{\lambda}(x)$. If the numbers $p_i$ satisfy  $\sum_{i\geq 0}ip_i=\lambda$ then $x$ is Borel normal to base $b$.
\end{theorem}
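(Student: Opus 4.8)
The plan is to first extract from the single moment hypothesis a much stronger distributional statement. For each $k$ the numbers $(\C^\lambda_{i,k}(x))_{i\ge0}$ form a probability vector, since every word of length $k$ occurs some number of times; moreover $\sum_{i\ge0} i\,\C^\lambda_{i,k}(x)=\lfloor\lambda b^k\rfloor/b^k$, because the left-hand side counts, normalized by $b^k$, the total number of length-$k$ positions in $x[1..\lfloor\lambda b^k\rfloor+k-1]$. Thus the mean of the occupancy distribution tends to $\lambda$ for every $x$. Applying Fatou's lemma for series to the liminf defining $p_i$ gives $\sum_i i p_i\le\lambda$, and the hypothesis makes this an equality. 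I would then take any subsequence along which $\C^\lambda_{i,k}(x)\to q_i$ for all $i$ (possible by a diagonal argument): one has $q_i\ge p_i$ and, again by Fatou, $\sum_i i q_i\le\lambda=\sum_i i p_i$, forcing $q_i=p_i$ for all $i\ge1$, while a separate truncation argument rules out any escape of mass to infinity and yields $\sum_i q_i=1$. Since every subsequential limit agrees, this upgrades the liminf to a genuine limit: $\lim_k\C^\lambda_{i,k}(x)=p_i$ for every $i$, where $(p_i)$ is an honest probability distribution of mean exactly $\lambda$, and the family of occupancy distributions is uniformly integrable. This rigidity, namely that no first-moment mass leaks into arbitrarily heavily repeated words, is the real content the hypothesis provides.

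Next I would reduce Borel normality to the behaviour of these occupancy statistics. Fix a target length $m$ and a word $v\in\Omega^m$, and let $c_w=|x[1..L]|_w$ for $w\in\Omega^k$ with $L=\lfloor\lambda b^k\rfloor+k-1$ and $k>m$. Counting occurrences of $v$ through the length-$k$ windows that contain them gives the identity $\sum_{w\in\Omega^k} c_w\,|v|_w=(k-m+1)\,|x[1..L]|_v+O(k)$, while the uniform average $\frac{1}{b^k}\sum_w|v|_w=(k-m+1)b^{-m}$ produces exactly the target frequency: writing $\bar c=\lfloor\lambda b^k\rfloor/b^k$, the main term $\bar c\sum_w|v|_w$ equals $\lfloor\lambda b^k\rfloor(k-m+1)b^{-m}$. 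Hence the frequency of $v$ in the prefix of length $L$ converges to $b^{-m}$ precisely when the correlation $\sum_{w}(c_w-\bar c)\,|v|_w$ is $o(\lfloor\lambda b^k\rfloor\,(k-m+1)\,b^{-m})$ as $k\to\infty$.

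The heart of the proof is this correlation bound, and it is where uniform integrability is used. I would split the sum according to the count level $i=c_w$. For the tail $i>i_0$, the crude bound $|v|_w\le k-m+1$ together with $\sum_{i>i_0}(i+\bar c)\,\C^\lambda_{i,k}(x)\to\sum_{i>i_0}(i+\lambda)p_i$, which tends to $0$ as $i_0\to\infty$ by uniform integrability, shows the tail contributes a vanishing proportion. The delicate part is the bulk $i\le i_0$: for each fixed $i$ one must show that among the words occurring exactly $i$ times the internal counts $|v|_w$ are, on average, close to the uniform value $(k-m+1)b^{-m}$, an equidistribution-within-each-occupancy-level statement that does not follow from the count distribution alone and is the main obstacle. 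I expect to establish it by exploiting the approximate independence of a word's global multiplicity from its internal structure, the same near-independence of overlapping positions that underlies the Poisson heuristic recalled after the definition, possibly via a second application of the counting identity relating lengths $k$ and $m$.

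Finally I would pass from the special prefixes of length $\approx\lambda b^k$ to all prefixes. Because these lengths grow by a factor $b$, convergence along them alone does not give normality; one must prove the frequency statement of the previous two steps for prefixes of length $\lfloor\mu b^k\rfloor$ for every $\mu$ in a dense subset of $[\lambda,\lambda b)$, which requires propagating the uniform integrability from parameter $\lambda$ to these larger parameters. Once the frequency of each $v$ tends to $b^{-m}$ along all such windows, the monotonicity of $n\mapsto|x[1..n]|_v$ lets me sandwich an intermediate prefix between two nearby window lengths and conclude $\lim_n|x[1..n]|_v/n=b^{-m}$, that is, Borel normality. I anticipate that propagating the no-escape property across window parameters, together with the within-level equidistribution above, will be the two points demanding the most care.
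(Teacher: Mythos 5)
Your opening step (upgrading the liminf to a genuine limit with uniform integrability via Fatou and subsequential limits) is correct, and in fact stronger than what is needed --- the paper's proof uses only the liminf lower bound $\C^\lambda_{i,k}(x)>p_i-\lambda\varepsilon/(2i_0^2)$ for finitely many $i\leq i_0$ and large $k$, which it turns into a bound of $\lambda\varepsilon b^k$ on the number of positions whose length-$k$ word occurs more than $i_0$ times. But the two load-bearing steps of your plan have genuine gaps. The heart of your argument --- that among words occurring exactly $i$ times the average of $\occ{v}{w}$ is close to $(k-m+1)b^{-m}$ --- is left unproved; you name it as the main obstacle and propose to attack it through an approximate independence of a word's multiplicity in $x$ from its internal structure, but for an arbitrary $x$ satisfying only the moment hypothesis no such independence is available, and the counting identity will not produce it. What actually closes this gap is a purely combinatorial fact about $\Omega^k$ that does not involve $x$ at all: the set $\bad(k,v,\varepsilon)$ of length-$k$ words whose internal $v$-frequency deviates from $b^{-|v|}$ by more than $\varepsilon$ has exponentially small cardinality (a Bernstein-type bound, the paper's Lemma~\ref{lemma:HW}). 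Once multiplicities are capped at $i_0$ --- by discarding the at most $\lambda\varepsilon b^k$ heavy positions granted by the moment hypothesis --- the bad words contribute at most $i_0\,k\,\card\bad(k,v,\varepsilon)$ occurrences, which is negligible, and no within-level equidistribution statement is needed anywhere.

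Your final step is also a wrong turn: you propose to prove the frequency statement for window parameters $\mu$ in a dense subset of $[\lambda,\lambda b)$, but the hypothesis constrains only the single parameter $\lambda$ and gives no control whatsoever over $\C^{\mu}_{i,k}(x)$ for $\mu\neq\lambda$; this propagation is not merely delicate, it is unavailable from the assumptions. The paper sidesteps the issue entirely with the Pyatetskii-Shapiro hot-spot criterion (its Lemma~\ref{lemma:shapiro}): Borel normality already follows from $\limsup_{N\to\infty}\occ{x[1...N]}{w}/N\leq C b^{-|w|}$ for some fixed constant $C$, so it suffices to sandwich an arbitrary $N$ between $\lambda b^{k-1}$ and $\lambda b^{k}$ at the cost of a factor $b$, yielding $C=b$; the exact limit $b^{-|w|}$ along all prefixes, which your plan labors toward, is never established and never needed. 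If you replace your unproved within-level equidistribution claim by the $\bad$-set cardinality bound, and your parameter-propagation step by the Pyatetskii-Shapiro lemma, your decomposition of occurrences (heavy positions, boundary effects, bad windows, good windows) matches the paper's proof almost exactly.
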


\begin{remark}
It is easy to verify that if numbers $p_i$ are defined as in Theorem~\ref{thm:normCriterion} 
then it is always the case that $\sum\limits_{i\geq 0} ip_i\leq \lambda$ (for example, it 
follows from Fatou's Lemma). 
\end{remark}

The following is a  consequence of Theorem~\ref{thm:normCriterion}.

\begin{corollary}\label{cor:normality}
Every  $\lambda$-Poisson generic sequence is Borel normal, but the two notions do not coincide.
The construction in Theorem~\ref{thm:main} yields infinitely many Borel normal sequences which are not $\lambda$-Poisson generic. 
\end{corollary}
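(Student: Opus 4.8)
The plan is to obtain both assertions as direct consequences of Theorem~\ref{thm:normCriterion} and Theorem~\ref{thm:main}. For the first assertion, suppose $x$ is $\lambda$-Poisson generic. Then for every $i$ the limit $\lim_{k\to\infty}\C^\lambda_{i,k}(x)=e^{-\lambda}\lambda^i/i!$ exists, so the numbers $p_i=\liminf_{k\to\infty}\C^\lambda_{i,k}(x)$ appearing in Theorem~\ref{thm:normCriterion} coincide with the point masses of the Poisson law of parameter $\lambda$. The mean of that law is $\lambda$, so $\sum_{i\geq 0} i\,p_i=\lambda$, and Theorem~\ref{thm:normCriterion} yields that $x$ is Borel normal to base $b$.

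For the remaining assertions I would feed Theorem~\ref{thm:main} a distribution that has the correct mean $\lambda$ but is \emph{not} the Poisson distribution. A convenient infinite family is the two-point distributions supported on $\{0,n\}$: for an integer $n$ set $p_n=\lambda/n$, $p_0=1-\lambda/n$, and $p_i=0$ for all other $i$. Each such choice satisfies $\sum_{i\geq 0}p_i=1$ and $\sum_{i\geq 0} i\,p_i=\lambda$. When $b\geq 3$ every integer $n>\lambda$ is admissible, and when $b=2$ one checks that $p_0=1-\lambda/n\geq 1/2$ holds for every $n\geq 2$, since $\lambda\leq\ln 2<1$ forces $\lambda/n\leq 1/2$; thus the hypotheses of Theorem~\ref{thm:main} are met in all cases. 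The theorem then produces a sequence $x_n$ with $\lim_{k\to\infty}\C^\lambda_{i,k}(x_n)=p_i$ for every $i$. Because $p_i\neq e^{-\lambda}\lambda^i/i!$, the sequence $x_n$ is not $\lambda$-Poisson generic; but $\sum_{i\geq0} i\,p_i=\lambda$, so by Theorem~\ref{thm:normCriterion} it is nonetheless Borel normal. This already shows the two notions do not coincide.

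To obtain \emph{infinitely many} distinct such sequences I would note that the limiting proportions $\lim_{k\to\infty}\C^\lambda_{i,k}(x)$ are determined by $x$, so sequences realizing different limiting distributions must themselves be different. As $n$ ranges over the admissible integers the two-point distributions above are pairwise distinct (their supports differ), hence the sequences $x_n$ are pairwise distinct Borel normal sequences, none of which is $\lambda$-Poisson generic. The only delicate point is a matter of verification rather than of mathematics: one must confirm that the chosen non-Poisson distributions satisfy \emph{all} the hypotheses of Theorem~\ref{thm:main}, most notably the side condition $p_0\geq 1/2$ in the binary case, and that distinct distributions really do force distinct sequences. Both checks are immediate for the two-point family just described.
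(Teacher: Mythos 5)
Your proposal is correct and follows essentially the same route as the paper, which presents the corollary as a direct consequence of Theorem~\ref{thm:normCriterion} (for a $\lambda$-Poisson generic $x$ the liminfs equal the Poisson probabilities, whose mean is $\lambda$, giving normality) together with Theorem~\ref{thm:main} applied to non-Poisson distributions of mean $\lambda$; your two-point family and the observation that distinct limiting distributions force distinct sequences simply fill in details the paper leaves implicit. One minor remark: in the binary case Theorem~\ref{thm:main} requires only $p_0\geq 1/2$, not $\lambda\leq \ln 2$, so instead of invoking $\lambda\leq\ln 2$ you could take $n\geq 2\lambda$, which makes your family of witnesses work for every positive $\lambda$ and every $b\geq 2$.
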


\section{Proof of Theorem~\ref{thm:main}}

\subsection{The construction} \label{sec:construction}

A  cyclic de Bruijn word of order~$n$ in a $b$-symbol alphabet  is a word $w$ of length $b^n$ where each word of length $n$  occurs  exactly once in the   circular word determined by~$w$.
The classical reference is~\cite{debruijn} but they have been found independently also by I. J. Good and by N. Korobov around the same time.
Our construction is based on the following property of de Bruijn words.

\begin{lemma}[\protect{Becher and Heiber \cite[Theorem~1]{BecherHeiber}}]
\
\label{lemma:infinite}
\begin{enumerate}
    \item Every cyclic de Bruijn word of order~$n$ over an alphabet of at least three 
symbols can be extended to a cyclic de Bruijn word of order~$n+1$. 

\item Every de Bruijn word of order~$n$ in two symbols can not be extended to order~$n+1$, but it can be extended to order~$n+2$.
\end{enumerate}
\end{lemma}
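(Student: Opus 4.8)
The plan is to translate both assertions into the language of de Bruijn graphs and Eulerian circuits. For each $n$ let $D_n$ be the directed graph whose vertices are the $b^n$ words of length $n$ over $\Omega$ and whose $b^{n+1}$ edges join $a_1\cdots a_n$ to $a_2\cdots a_n c$ for every $c\in\Omega$; every vertex has in-degree and out-degree $b$, so $D_n$ is balanced and strongly connected. Reading off the last symbol of each successive vertex, a cyclic de Bruijn word of order $n+1$ is exactly an Eulerian circuit of $D_n$, whereas a cyclic de Bruijn word of order $n$ is a Hamiltonian cycle of $D_n$. The first thing I would do is pin down precisely what the prefix relation forces: if the order-$n$ word $w$ is to be a prefix of an order-$(n+1)$ word $w'$, then the length-$n$ windows lying entirely inside the first $b^n$ positions of $w'$ are determined by $w$, which forces the first $b^n-n$ edges of the Eulerian circuit representing $w'$ to be the initial arc of the Hamiltonian cycle determined by $w$; the circuit is then free. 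Thus ``$w$ extends to order $n+1$'' becomes ``this forced initial trail can be completed to an Eulerian circuit of $D_n$''.

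With this reformulation in hand, the second step handles the case $b\ge 3$ (part~1). Deleting the forced trail from $D_n$ leaves every vertex with out-degree and in-degree at least $b-2\ge 1$, and at all but the $n-1$ terminal vertices of the trail the surplus is $b-1\ge 2$. I would show that for $b\ge 3$ this surplus keeps the residual multigraph strongly connected and balanced except for the single deficit/excess pair created at the two ends of the trail; by the standard splicing criterion for Eulerian trails this is exactly what is needed to close the trail into a full Eulerian circuit, so $w$ always extends. The only way the argument can fail is if deleting edges isolates a strongly connected piece, and the spare out-degree $b-1\ge 2$ is precisely what rules this out.

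The binary case (part~2) is where I expect the real work. Here every vertex of $D_n$ has out-degree only $2$, so the forced trail already consumes about half of all edges and leaves out-degree $1$ at nearly every vertex; the residual graph is then close to functional and can genuinely break into several balanced components that cannot all be threaded onto the trail. To establish that a one-step extension may fail, I would isolate an invariant of the residual graph certifying this disconnection and exhibit order-$n$ words---naturally the ones inherited from the forced lower-order prefixes---for which the invariant is violated; the likeliest candidate is a parity or connectivity count of the self-loops and short cycles that survive the deletion, to be guessed from the small cases $n=2,3$ and then shown to persist for all $n$. For the positive half, extendability to order $n+2$, I would instead work in $D_{n+1}$, where the same prefix forces only about a quarter of the edges; the extra slack together with the strong connectivity of de Bruijn graphs should let the residual cycles be spliced back onto the forced trail, which I would make constructive by describing explicitly how the seam is completed.

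The \emph{main obstacle} is the negative binary statement. Unlike the two positive assertions, it is not a soft connectivity argument: one must show that for the relevant order-$n$ words the residual graph obtained by deleting the forced trail fails to be connected, and then that this obstruction disappears after raising the order by one more. I would therefore concentrate the effort on identifying and verifying the right invariant, treating the case $b\ge 3$ and the two-step binary construction as comparatively routine applications of Eulerian-circuit splicing in strongly connected balanced digraphs.
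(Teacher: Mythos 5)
The statement you are proving is not actually proved in this paper at all: it is imported verbatim as Lemma~\ref{lemma:infinite} from Becher and Heiber \cite[Theorem~1]{BecherHeiber}, so the only fair comparison is with that cited proof, whose frame (de Bruijn graphs, Hamiltonian cycle of order $n$ inside the Eulerian-circuit formulation of order $n+1$, forced prefix trail) you have correctly reconstructed, including the bookkeeping that the prefix forces $b^n-n$ of the $b^n$ Hamiltonian edges. The gap is that your plan leaves both load-bearing steps unproved, and the one argument you do commit to for part~1 is invalid as stated: in a balanced digraph, residual in/out-degree $b-1\geq 2$ at every vertex does \emph{not} rule out disconnection (two vertex-disjoint balanced pieces of arbitrarily high degree are a counterexample), so ``the spare out-degree is precisely what rules this out'' is not a proof. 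Worse, the usual soft splicing argument is unavailable here for a structural reason you should make explicit: since the order-$n$ word must be a \emph{contiguous prefix}, you may not splice residual cycles into the middle of the forced trail (even though every residual cycle meets the Hamiltonian cycle, which covers all vertices); what is needed is strong connectivity of the residual graph itself, and establishing that for $b\geq 3$ from the specific structure of de Bruijn graphs is the actual content of the cited theorem, not a ``comparatively routine application.''

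For part~2, the invariant you hope to ``guess from the small cases $n=2,3$'' is in fact simple and should be nailed down rather than deferred: a Hamiltonian cycle visits each vertex exactly once, so it can never use the self-loops $0^{n+1}$ at vertex $0^n$ and $1^{n+1}$ at vertex $1^n$; hence both loops survive the deletion. In the binary case every vertex of the residual graph has in- and out-degree exactly $1$ (away from the seam created by the $n$ unforced edges), so the residual edge set is a vertex-disjoint union of cycles in which the two loops are two \emph{distinct} cycles; a disconnected residual cannot be traversed as a single closed trail appended after the prefix, which is exactly the non-extendability to order $n+1$. Your proposal names ``self-loops and short cycles'' as candidate invariants, so you are circling the right object, but without this argument the negative statement -- which you yourself identify as the main obstacle -- is simply not proved. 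Likewise the positive binary claim (extendability to order $n+2$) is supported only by the phrase ``extra slack''; one must actually show the corresponding residual graph in the order-$(n+1)$ de Bruijn graph is connected, or give the explicit completion, as the cited reference does. As it stands, your text is a correct translation of the problem plus a research plan, with the two decisive lemmas missing and one heuristic (degree implies connectivity) that would fail if taken literally.
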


For example, consider the  alphabet  $\{0,1,2\}$. 
Then, $012110022$ is a cyclic de Bruijn word of order $2$ which can be extended to
 $012110022010200011120212221$,
which is a cyclic de Bruijn word of order~$3$.

Lemma~\ref{lemma:infinite} allows us to define infinite de Bruijn sequences.

\begin{definition}\label{def:db}
An infinite de Bruijn sequence in a $b$-symbol alphabet $\Omega$, $b\geq 3$, is an infinite sequence $x\in \Omega^{\mathbb{N}}$ such that for every 
$k\in \mathbb{N}$, $x[1...b^k]$ is a cyclic de Bruijn word of order~$k$. In the case $b=2$, we say $x\in\Omega^{\mathbb{N}}$ is an infinite de Bruijn sequence if for every $k\in\mathbb{N}$, $x[1...2^{2k-1}]$ is a cyclic de Bruijn word of order~$2k-1$. 
\end{definition}

Given a real number $y\in[0,1)$, we write  $\{y\}_k$ for  the truncation to $k$ digits of the unique base-$b$ representation of $y$ which does not end in an infinite tail of $(b-1)$'s. In the sole  case $y=1$, we choose the base-$b$ representation $\sum_{i\geq 1}(b-1)b^{-i}$.

\begin{construction}
Let $\lambda $ be a positive real number and $\Omega$ a $b$-symbol alphabet , $b\geq 2$.
Let $(p_i)_{i\in \mathbb N_0}$ be a 
sequence of non-negative real numbers such that 
$\sum\limits_{i\geq 0}p_i=1$ and $\sum\limits_{i\geq 0}ip_i=\lambda$, and let $p_0\geq 1/2$ if $b=2$. 
We define $g:\mathbb{N}\rightarrow\mathbb{N}$ as 
$    g(k)= \left\lceil\frac{k}{2}\right\rceil$ and 
we define the real numbers $(p_i^k)_{i\geq 0, k\geq 1}$ inductively as follows. 
For every $i\geq 1$,
\begin{align*}
 p_i^1&=\{p_i\}_{g(1)},
\\
p_0^1 &= 1- \sum_{i\geq 1}p_i^1.
\end{align*}
And for every $k\geq 1$ and $i\geq 1$, 
\begin{align*}
 p_i^{k+1}&= \frac{1}{b}p_i^k+\left\{\frac{b-1}{b}p_i\right\}_{g(k+1)}
\\
p_0^{k+1}&=1-\sum_{i\geq 0}p_i^{k+1}.
\end{align*}

We fix  an infinite de Bruijn sequence $A$
 over the alphabet $\Omega $. We define $A_k$ to be $A[1...b^k]$. 

Given a sequence $w$ of length $b^k$
 we say $\delta$ is a block in $w$ if it is a subsequence of $w$ and $|\delta|= b^j\leq b^k$ for some $j\in\mathbb{N}_0$.
We say  that a block $\delta$ in $w$ has absolute length~$|\delta|$ and relative length  $|\delta|b^{-k}$ with respect to~$w$.

The construction works by steps. Let~$x_k$ be the output of the construction after Step~$k$. For all $k$, $x_k$ is a prefix of $x_{k+1}$. The output of the construction is the infinite word $x$ obtained as the limit of the finite words $x_k$. 
Start with $x_0$ equal to the empty word. 

\paragraph{ Step 1.}
In this first step, we consider the base-$b$ expansion of $p_i^1$, for $i\geq 1$, 
\[
p_i^1=0.c_i
\]
For each $c_i$, $i\geq 1$, we choose $c_i$ blocks of relative length $b^{-1}$ with respect to $A_{1}$, that is, blocks of length $1$ (if $c_i=0$ we don't  choose 
any blocks). The selected blocks should be non-overlapping. This is possible thanks to the fact that $\sum_{i\geq 1}p_i^1\leq 1$. We select the blocks from left to right, leaving no gaps at the beginning or in between blocks.

The output of the construction after Step 1 is the concatenation of the chosen blocks, in any order, where for every $i\geq 1$ each of the $c_i$ selected  blocks is repeated exactly $i$ times. 

\paragraph{Step k+1.}
We consider the base-$b$ expansion of $\left\{\frac{b-1}{b}p_i\right\}_{g(k+1)}$ for $i\geq 1$:
\[
\left\{\frac{b-1}{b}p_i\right\}_{g(k+1)}= 0.a_{i,1}a_{i,2}..a_{i,g(k+1)}
\]
where $a_{i,j}\in\{0,1,2,...,b-1\}$.

We select blocks in $A_{k+1}$ in the following manner:
for each $a_{i,j}$, $i\geq 1$, $j\leq g(k+1)$, we choose $a_{i,j}$ blocks of relative length $b^{-j}$ with respect to $A_{k+1}$. If $a_{i,j}=0$ we don't select any blocks. Notice that only finitely many blocks are selected. All the selected blocks should be non-overlapping. This is possible due to the fact that 
\[
\sum_{i\geq 1}
\sum_{1\leq j\leq g(k+1)}a_{i,j}\frac{1}{b^j}=\sum_{i\geq 1}\left\{\frac{b-1}{b}p_i\right\}_{g(k+1)}\leq \frac{b-1}{b}=\frac{|A_{k+1}[b^k+1...b^{k+1}]|}{b^{k+1}}\]

In the case $b=3$, we can select the blocks anywhere in $A_{k+1}$, so there could be gaps between the blocks selected at step $k$ and the ones at step $k+1$. For example, we may take blocks from $A_{k+1}[b^k+1...b^{k+1}]$. In the case $b=2$, however, we do not allow gaps.

The construction now appends the chosen blocks to $x_k$. For every $i\geq 1$, $j\leq g(k+1)$, each of the $a_{i,j}$ selected blocks is repeated exactly $i$ times. We refer to each of the chosen blocks of $A$ as constituent segments in the output $x_{k+1}$. 
We say that the concatenation of $i$-many copies of a constituent segment corresponding to $a_{i,j}$ is a run segment in the output. 
\end{construction}

\subsection{An example}
To illustrate the way the construction works, we give an example of three steps of the execution. Just to make the example more enlightening, we  set $g(k)=k$ in this section. Take $p_0=0$, $p_1=1/2$, $p_2=5/18$, $p_3= 2/9$, and $p_i=0$ for $i\geq 4$. In this case $\lambda= 31/18$. Now fix $b=3$, $\Omega=\{0,1,2\}$ and 
\[
A = 012110022010200011120212221...
\]

$p_1=0.1111...$ and $\frac{2}{3}p_1= 0.1000...$
\medskip

$p_2=0.0211...$ and $\frac{2}{3}p_2= 0.0120...$
\medskip

$p_3=0.0200... $ and $\frac{2}{3}p_3= 0.0110...$
\medskip

\noindent
\textbf{Step 1}:
\[
A=\fcolorbox[gray]{0}{0.9}{0}12\bigg|110022\bigg|010200011120212221\bigg|...
\]
\[
x_1= \fcolorbox[gray]{0}{0.9}{0}
\]
\noindent
\textbf{Step 2}:
\[
A=012\bigg|\fcolorbox[gray]{0}{0.9}{110}\fcolorbox[gray]{0}{0.7}{0}\fcolorbox[gray]{0}{0.5}{2}2\bigg|010200011120212221\bigg|...
\]
\[
x_2= 0\fcolorbox[gray]{0}{0.9}{110}\fcolorbox[gray]{0}{0.7}{0}\fcolorbox[gray]{0}{0.7}{0}\fcolorbox[gray]{0}{0.5}{2}\fcolorbox[gray]{0}{0.5}{2}\fcolorbox[gray]{0}{0.5}{2}
\]
In this case $\fcolorbox[gray]{0}{0.9}{0}$,  $\fcolorbox[gray]{0}{0.9}{110}$, $\fcolorbox[gray]{0}{0.7}{0}$ and $\fcolorbox[gray]{0}{0.5}{2}$ are the constituent segments of $x_2$.

\noindent
\textbf{Step 3}:
\[ 
A=012\bigg|110022\bigg|\fcolorbox[gray]{0}{0.9}{010200011}\fcolorbox[gray]{0}{0.7}{120}\fcolorbox[gray]{0}{0.5}{021}\fcolorbox[gray]{0}{0.7}{2}\fcolorbox[gray]{0}{0.7}{2}\fcolorbox[gray]{0}{0.5}{1}\bigg|...
\]
\[
x_3= 011000222\fcolorbox[gray]{0}{0.9}{010200011}\fcolorbox[gray]{0}{0.7}{120}\fcolorbox[gray]{0}{0.7}{120}\fcolorbox[gray]{0}{0.5}{212}\fcolorbox[gray]{0}{0.5}{212}\fcolorbox[gray]{0}{0.5}{212}\fcolorbox[gray]{0}{0.7}{2}\fcolorbox[gray]{0}{0.7}{2}\fcolorbox[gray]{0}{0.7}{2}\fcolorbox[gray]{0}{0.7}{2}\fcolorbox[gray]{0}{0.5}{1}\fcolorbox[gray]{0}{0.5}{1}\fcolorbox[gray]{0}{0.5}{1}
\]
In this case, $\fcolorbox[gray]{0}{0.7}{120}\fcolorbox[gray]{0}{0.7}{120}$ is the run segment corresponding to the constituent segment $\fcolorbox[gray]{0}{0.7}{120}$, and $\fcolorbox[gray]{0}{0.5}{212}\fcolorbox[gray]{0}{0.5}{212}\fcolorbox[gray]{0}{0.5}{212}$ is the run segment corresponding to the constituent segment $\fcolorbox[gray]{0}{0.5}{212}$.

\subsection{Correctness}
To prove the correctness of the construction we use the following  fact and five lemmas.

\begin{fact}\label{fact:(*)}
For every  $y\in[0,1)$ and every $k\geq 1$, 
$y-\frac{1}{b^k}<\{y\}_k\leq y$. In the case $y=1$, $\{y\}_k=y-\frac{1}{b^k}$.
\end{fact}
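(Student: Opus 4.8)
The plan is to work directly with the base-$b$ digit expansion underlying the definition of $\{y\}_k$. For $y\in[0,1)$, write its unique base-$b$ representation that does not end in an infinite tail of $(b-1)$'s as $y=\sum_{i\geq 1}d_i b^{-i}$ with each $d_i\in\{0,1,\ldots,b-1\}$; then by the definition of truncation $\{y\}_k=\sum_{i=1}^k d_i b^{-i}$, so that $y-\{y\}_k=\sum_{i>k}d_i b^{-i}$ is exactly the tail of the series. Everything then reduces to estimating this tail.

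The upper bound $\{y\}_k\leq y$ is immediate, since the omitted tail $\sum_{i>k}d_i b^{-i}$ is a sum of non-negative terms. For the lower bound I would bound the same tail from above. Since every digit satisfies $d_i\leq b-1$, we have $\sum_{i>k}d_i b^{-i}\leq\sum_{i>k}(b-1)b^{-i}=b^{-k}$, the last equality being a routine geometric-series computation.

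The only point requiring care, and really the crux of the statement, is to make this inequality strict: equality would force $d_i=b-1$ for all $i>k$, that is, the representation would end in an infinite tail of $(b-1)$'s, which is precisely what the chosen representation excludes. Hence $y-\{y\}_k<b^{-k}$, which rearranges to $\{y\}_k>y-b^{-k}$ and, combined with the upper bound, yields $y-b^{-k}<\{y\}_k\leq y$. I expect this strictness argument to be the main (and only) subtlety, as it is exactly where the convention excluding $(b-1)$-tails is used.

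Finally, for the special case $y=1$ the definition instead fixes the representation $\sum_{i\geq 1}(b-1)b^{-i}$, so that $\{1\}_k=\sum_{i=1}^k(b-1)b^{-i}$. Summing this finite geometric series gives $\{1\}_k=1-b^{-k}=y-b^{-k}$ exactly, as claimed; here the strict inequality of the general case degenerates to an equality precisely because the excluded-tail condition no longer rules out the all-$(b-1)$'s pattern.
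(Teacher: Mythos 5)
Your proof is correct and complete: the tail estimate $y-\{y\}_k=\sum_{i>k}d_ib^{-i}\leq b^{-k}$, with strictness exactly because the chosen representation forbids an infinite tail of $(b-1)$'s, together with the direct computation $\{1\}_k=1-b^{-k}$, establishes everything claimed. Note that the paper states this Fact without any proof at all (treating it as evident), so your write-up simply supplies the standard argument the authors left implicit, and you correctly identified the one genuine subtlety --- where the convention excluding $(b-1)$-tails is used.
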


\begin{lemma}\label{lemma:base2}
Let $b=2$ and $k\geq 2$. Then, at step $k$ of the construction it is always possible to choose all necessary blocks from $A[1...2^{k-1}]$.
\end{lemma}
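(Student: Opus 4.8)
The plan is to reduce the statement to a single length estimate: the blocks selected at step $k$ have total absolute length at most $2^{k-1}$, and since for $b=2$ they are placed contiguously with no gaps starting from position~$1$, they therefore all lie inside the prefix $A[1\ldots 2^{k-1}]$ of $A_k$. Since the lemma concerns $k\geq 2$, only the ``Step $k+1$'' rule of the construction is relevant (the separate Step~$1$ formula never enters).

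First I would record what is selected at step $k$. Reading the ``Step $k+1$'' rule with the active step equal to $k\geq 2$, the blocks correspond to the base-$2$ digits $a_{i,j}$ of $\left\{\frac{b-1}{b}p_i\right\}_{g(k)}=\left\{\frac12 p_i\right\}_{g(k)}$, and each such digit contributes $a_{i,j}$ blocks of relative length $2^{-j}$ with respect to $A_k$. Hence the total relative length of the blocks selected at step $k$ equals
\[
\sum_{i\geq 1}\sum_{1\leq j\leq g(k)} a_{i,j}\,2^{-j}
=\sum_{i\geq 1}\left\{\tfrac12 p_i\right\}_{g(k)}.
\]

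Next I would bound this sum. By Fact~\ref{fact:(*)} we have $\left\{\tfrac12 p_i\right\}_{g(k)}\leq \tfrac12 p_i$ for every $i$, so
\[
\sum_{i\geq 1}\left\{\tfrac12 p_i\right\}_{g(k)}
\leq \tfrac12\sum_{i\geq 1}p_i
=\tfrac12\,(1-p_0)\leq \tfrac12,
\]
where the last inequality uses $\sum_{i\geq 0}p_i=1$ together with $p_0\geq 0$; the stronger hypothesis $p_0\geq 1/2$ available in the case $b=2$ even yields the sharper bound $1/4$, but $1/2$ already suffices. Since $A_k$ has length $2^k$, the blocks translate to absolute lengths $2^{k-j}$ and their total absolute length is at most $2^k\cdot\tfrac12=2^{k-1}$.

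Finally I would invoke the placement rule for $b=2$. Because gaps are not allowed, the selected blocks are taken as consecutive subwords of $A_k$ beginning at position~$1$; having total length at most $2^{k-1}$, they occupy positions contained in $[1,2^{k-1}]$, so they are all subwords of $A[1\ldots 2^{k-1}]$, as claimed. The only point requiring care is the bookkeeping that converts the relative lengths $2^{-j}$ (taken with respect to $A_k$ of length $2^k$) into absolute lengths $2^{k-j}$ and applies the truncation bound of Fact~\ref{fact:(*)} to $\frac{b-1}{b}p_i=\frac12 p_i$; once this is set up correctly the argument is a direct length count, so I expect no substantive obstacle.
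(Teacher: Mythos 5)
There is a genuine gap: you bound only the blocks selected at step $k$ and then assert they are ``placed contiguously with no gaps starting from position~$1$,'' but for $k\geq 2$ the step-$k$ blocks do \emph{not} start at position~$1$ --- the no-gap rule for $b=2$ forces them to start immediately after all the blocks already consumed at steps $1,\dots,k-1$, and your argument never accounts for that previously occupied prefix. A per-step bound of $2^{k-1}$ on the new blocks alone says nothing about where they end inside $A_k$. Relatedly, your remark that $p_0\geq 1/2$ is dispensable and ``$1/2$ already suffices'' is exactly backwards: without $p_0\geq 1/2$ the cumulative occupancy after step $k$, which is $2^k\sum_{i\geq 1}p_i^k\leq 2^k(1-p_0)$ (by Lemma~\ref{lemma:sum}, or directly from the recursion defining $p_i^k$), can approach $2^k$ and the blocks would spill past position $2^{k-1}$, making the lemma false. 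The hypothesis $p_0\geq 1/2$ is precisely what pins the total usage below $2^{k-1}$.

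The paper's proof handles this by induction on the occupied region: it shows the new blocks at step $k$ have total relative length at most $\frac{1}{2}(1-p_0)\leq \frac{1}{4}$ with respect to $A_k$ (here $p_0\geq 1/2$ is used), hence absolute length at most $2^{k-2}$, which is exactly the length of the window $A[2^{k-2}+1\ldots 2^{k-1}]$; the induction hypothesis guarantees this window is still free at step $k$, so the new blocks fit inside it and leave $A[2^{k-1}+1\ldots 2^k]$ free for step $k+1$. Your argument can be repaired along a slightly different but equally valid line: bound the \emph{cumulative} relative length of all constituent segments chosen through step $k$ by $\sum_{i\geq 1}p_i^k\leq \sum_{i\geq 1}p_i = 1-p_0\leq \frac{1}{2}$, so that the contiguous packing starting at position~$1$ occupies at most $2^{k-1}$ symbols in total. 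Either way, the two ingredients you omitted --- tracking the space used by earlier steps, and invoking $p_0\geq 1/2$ --- are both essential; as written, your proof establishes only that the step-$k$ blocks would fit in $A[1\ldots 2^{k-1}]$ if that prefix were empty, which it is not.
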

\begin{proof}
First of all, notice that for $k\geq 1$, the relative length with respect to $A_{k+1}$ of the blocks we need to choose at step $k+1$ is
\[
\sum_{i\geq 1}\sum_{1\leq j\leq g(k+1)}a_{i,j}\frac{1}{2^j}=\sum_{i\geq 1}\left\{\frac{1}{2}p_i\right\}_{g(k+1)}\leq \frac{1}{2}\sum_{i\geq 1} p_i = \frac{1}{2}(1-p_0)\leq \frac{1}{4} = \frac{|A[2^{k-1}+1...2^{k}]|}{2^{k+1}},
\]
where $a_{i,j}$ has the same meaning 
as in the construction, 
and we used the hypothesis $p_0\geq \frac{1}{2}$ in the last inequality.

This means that $A[2^{k-1}+1...2^k]$ has enough space to accommodate all the necessary blocks at step $k+1$. Then, we only need to check that $A[2^{k-2}+1\dots 2^{k-1}]$ is free at step $k$ for every $k\geq 2$.
We can check it inductively. In the first step, the used proportion of $A_1$ is 
\[
\sum_{i\geq 1}p_i^1\leq \sum_{i\geq 1}p_i = 1-p_0\leq \frac{1}{2},
\]
where the last inequality holds because $p_0\geq 1/2$. Then, at least half of $A_1=A[1...2]$ remains unused after step 1, so $A[2^{2-2}+1...2^{2-1}]= A[2...2]$ is free at step $2$. This proves the base case. 

Now suppose that at step $k$, $A[2^{k-2}+1...2^{k-1}]$ is free. Thanks to the first observation, we can choose all necessary blocks there. This leaves $A[2^{k-1}+1...2^k]$ free to use at step $k+1$.

\end{proof}

\begin{lemma}\label{lemma:sum}
For every $i\geq 1$ the sum of the relative lengths with respect to $A_k$ of all 
constituent segments in the output $x_k$ that are repeated exactly $i$ times is $p_i^k$. 
\end{lemma}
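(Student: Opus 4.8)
The plan is to prove the identity by induction on $k$, and the crucial observation is how relative lengths transform when the reference word changes from $A_k$ to $A_{k+1}$: a block keeps its absolute length, but the reference word becomes $b$ times longer, so its relative length is divided by $b$. For $i\geq 1$ and $k\geq 1$, let $L_i^k$ denote the sum of the relative lengths with respect to $A_k$ of all constituent segments in $x_k$ repeated exactly $i$ times, so that the goal is $L_i^k=p_i^k$. For the base case $k=1$, I would use that $p_i^1=\{p_i\}_{g(1)}=c_i\,b^{-1}$, where $c_i\in\{0,\dots,b-1\}$ is the single base-$b$ digit of Step~$1$, together with the fact that the construction places exactly $c_i$ blocks of absolute length $1$, each repeated $i$ times; since $A_1$ has length $b$, each such block has relative length $b^{-1}$, and counting every constituent segment once gives $L_i^1=c_i\,b^{-1}=p_i^1$.

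For the inductive step, assuming $L_i^k=p_i^k$ for all $i\geq 1$, I would partition the constituent segments of $x_{k+1}$ into those inherited from $x_k$ and those newly chosen at Step~$k+1$; since a segment's repetition multiplicity is fixed once it is placed, this partition respects the classification by ``repeated exactly $i$ times''. Every inherited segment is a subword of $A_k$, hence of $A_{k+1}$, and keeps its absolute length, so by the observation above its relative length with respect to $A_{k+1}$ is that with respect to $A_k$ divided by $b$; the inherited segments repeated exactly $i$ times therefore contribute $L_i^k/b=p_i^k/b$. The newly chosen segments for index $i$ consist, for each $j\leq g(k+1)$, of $a_{i,j}$ blocks of relative length $b^{-j}$ with respect to $A_{k+1}$, where $0.a_{i,1}\cdots a_{i,g(k+1)}$ is the base-$b$ expansion of $\left\{\frac{b-1}{b}p_i\right\}_{g(k+1)}$, and so they contribute $\sum_{1\leq j\leq g(k+1)}a_{i,j}\,b^{-j}=\left\{\frac{b-1}{b}p_i\right\}_{g(k+1)}$. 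Adding the two contributions gives $L_i^{k+1}=\frac{1}{b}p_i^k+\left\{\frac{b-1}{b}p_i\right\}_{g(k+1)}$, which is precisely the recursive definition of $p_i^{k+1}$.

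The hard part is entirely the change-of-reference bookkeeping in the inductive step: one must resist measuring the inherited segments against $A_k$ and instead re-measure them against the $b$-times-longer $A_{k+1}$, which is exactly what produces the factor $\frac{1}{b}$ matching the coefficient of $p_i^k$ in the recursion. Once this is handled, the remaining arguments — that the two families of segments are disjoint and jointly exhaust the constituent segments of $x_{k+1}$ repeated exactly $i$ times, and that the selected digits sum to the truncated values — are direct term-by-term comparisons with the definition of $p_i^k$.
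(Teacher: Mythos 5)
Your proof is correct and follows essentially the same route as the paper's: induction on $k$, with the base case read off from Step~1 and the inductive step splitting the constituent segments of $x_{k+1}$ into those inherited from $x_k$ (whose relative lengths get divided by $b$ when re-measured against $A_{k+1}$) and the newly chosen blocks (contributing $\left\{\frac{b-1}{b}p_i\right\}_{g(k+1)}$), matching the recursion defining $p_i^{k+1}$. The paper states this argument only as a brief sketch; your write-up is the same argument carried out in full detail.
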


\begin{proof}
It can easily be checked by induction on $k$, using the definition of $p_i^k$ and the way the construction  
operates. 
If $k=1$, 
this is immediately true by Step 1 of the construction. 
Assuming the statement is true for $k$, let us see it is also 
true for $k+1$. Notice that blocks that occur in $x_k$ 
have a relative length in $A_{k+1}$ which is $1/b$ of their 
relative length in $A_k$. The extra blocks added 
contribute with $\left\{\frac{b-1}{b}p_i\right\}_{g(k+1)}$ to the sum. 
Then, the sum of the relative lengths with respect 
to $A_{k+1}$ is \[
\frac{1}{b}p_i^k+\left\{\frac{b-1}{b}p_i\right\}_{g(k+1)}= p_i^{k+1}.
\]
\end{proof}

\begin{lemma}\label{lemma:convergence}
For every $i\in \mathbb N_0$, $\lim\limits_{k\rightarrow\infty} 
p_i^k = p_i$. In fact, for every $i\geq 1$, $k\geq 1$, the following estimation holds, 
 \begin{equation*}\label{eq1}
  \tag{$\dagger$}
     p_i-\frac{k}{b^{g(k)}}\leq p_i^k\leq p_i.
 \end{equation*}
\end{lemma}
\begin{proof}
For $i\geq 1$ we prove (\ref{eq1})  by induction on $k$. If $k=1$ it follows immediately from the definition of $p_i^1$ and Fact~\ref{fact:(*)}. 
For the inductive step, notice that
\begin{align*}
p_i^{k+1}=\frac{1}{b}p_i^k+\left\{\frac{b-1}{b}p_i\right\}_{g(k+1)}\leq \frac{1}{b}p_i+\frac{b-1}{b}p_i\leq p_i.
\end{align*}
\begin{align*}
p_i-p_i^{k+1}= p_i-\left(\frac{1}{b}p_i^k+\left\{\frac{b-1}{b}p_i\right\}_{g(k+1)}\right) & \leq p_i-\frac{1}{b}\left(p_i-\frac{k}{b^{g(k)}}\right)-\left\{\frac{b-1}{b}p_i\right\}_{g(k+1)} \\
 & \leq \frac{b-1}{b}p_i-\left\{\frac{b-1}{b}p_i\right\}_{g(k+1)} + \frac{k}{b^{1+g(k)}}\\
 & \leq \frac{1}{b^{g(k+1)}}+\frac{k}{b^{1+g(k)}}\\
 & \leq \frac{k+1}{b^{g(k+1)}}.
\end{align*}

In the last inequality we used the fact that $g(k+1)\leq g(k)+1$.

In the case of $i=0$,
\begin{align*}
    |p_0^k-p_0|=\left|1-\sum_{i\geq 1}p_i^k-\left(1-\sum_{i\geq 1}p_i\right)\right|=\sum_{i\geq 1}(p_i-p_i^k).
\end{align*}
Given $\varepsilon>0$, there exists $N>0$ such that $\sum_{i\geq N+1}p_i<\frac{\varepsilon}{2}$. Then,
\[
|p_0^k-p_0|\leq \sum_{i=1}^N (p_i-p_i^k)+\sum_{i\geq N+1}p_i<\frac{kN}{b^{g(k)}}+\frac{\varepsilon}{2}.
\]
If $k$ is big enough, then $\frac{kN}{b^{g(k)}}<\frac{\varepsilon}{2}$ and
$|p_0^k-p_0|<\varepsilon$,  as desired.
\end{proof}

\begin{lemma}\label{lemma:length}
Let $x_k$ be the  word output by the construction after step $k$. Then,
\[
\lim_{k\rightarrow\infty} \frac{|\lfloor\lambda b^k\rfloor+k-1-|x_k||}{b^k}=0.
\]
\end{lemma}
\begin{proof}
By Lemma~\ref{lemma:sum}, $|x_k|= b^k \sum\limits_{i\geq 1}ip_i^k$. Then,
\[ 
\frac{|\lfloor\lambda b^k\rfloor+k-1-|x_k||}{b^k}\leq \frac{k-1}{b^k}+ \frac{|\lfloor\lambda b^k\rfloor-\lambda b^k|}{b^k}+ \left|\lambda-\sum\limits_{i\geq 1}ip_i^k\right|\leq  \frac{k}{b^k}+\left|\lambda-\sum\limits_{i\geq 1}ip_i^k\right|.
\]
It suffices to prove then that the last term converges to zero.
Recall that  $(p_i)_{i\in\mathbb N_0}$ satisfies
$\lambda = \sum\limits_{i\geq 1}ip_i$. 
Hence,
\[
\left|\lambda-\sum\limits_{i\geq 1}ip_i^k\right|=\sum\limits_{i\geq 1}i(p_i-p_i^k).
\]
Given $\varepsilon>0$ take $N$ big enough so that $\sum\limits_{i\geq N+1}ip_i<\frac{\varepsilon}{2}$.
By means of Equation~(\ref{eq1})  of
Lemma~\ref{lemma:convergence},
\[
\sum\limits_{i\geq 1}i(p_i-p_i^k)\leq \sum\limits_{i=1}^{N}i\frac{k}{b^{g(k)}}+\sum\limits_{i\geq N+1}ip_i< \frac{k}{b^{g(k)}}\frac{N(N+1)}{2}+\frac{\varepsilon}{2}.
\]
Clearly, when $k$ is large enough,  $
\frac{k}{b^{g(k)}}\frac{N(N+1)}{2}<\frac{\varepsilon}{2}$.
\end{proof}

Recall that  each of the blocks of $A$ used at step $k$  is a constituent segment in the output~$x_k$, and the concatenation of $i$-many copies of a constituent segment corresponding to $a_{i,j}$ is a run segment. 
Let $B_k$ be the number of run segments in the output $x_k$.

\begin{lemma}\label{lemma:blocks}
The quantities $B_k$ satisfy
\[
\lim_{k\rightarrow\infty}\frac{kB_k}{b^k}= 0.
\]
\end{lemma}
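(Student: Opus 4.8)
The plan is to express $B_k$ as a sum over all steps of the number of blocks selected at each step, and to show that this total grows only like $b^{g(k)}=b^{\lceil k/2\rceil}$, which is negligible against $b^k$. First I would observe that each block of $A$ chosen at a given step becomes exactly one constituent segment, hence exactly one run segment, so $B_k$ is simply the total number of blocks selected during steps $1,\dots,k$. Writing $R_m$ for the number of blocks selected at step $m$, we have $B_k=\sum_{m=1}^{k}R_m$, where $R_1=\sum_{i\ge 1}c_i$ and, for $m\ge 2$, $R_m=\sum_{i\ge 1}\sum_{j=1}^{g(m)}a_{i,j}$ with the digits $a_{i,j}$ as in the description of step $m$, so that $\sum_{j=1}^{g(m)}a_{i,j}b^{-j}=\{\frac{b-1}{b}p_i\}_{g(m)}$.

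The key step is the bound $R_m\le b^{g(m)}$. To get it I would use that every selected block at step $m$ has relative length $b^{-j}$ with $j\le g(m)$, so that $a_{i,j}=a_{i,j}b^{-j}b^{j}\le b^{g(m)}\,a_{i,j}b^{-j}$. Summing first over $j$ and then over $i$, and using Fact~\ref{fact:(*)} in the form $\sum_{i\ge1}\{\frac{b-1}{b}p_i\}_{g(m)}\le\frac{b-1}{b}\sum_{i\ge1}p_i\le\frac{b-1}{b}$ (together with the analogous estimate $\sum_{i\ge1}c_i b^{-1}\le\sum_{i\ge1}p_i\le1$ at step $1$), yields $R_m\le b^{g(m)}$ for every $m\ge1$. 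This is the heart of the argument: it says that although we may select blocks of very small relative length, down to $b^{-g(m)}$, the number we can afford is controlled by the inverse of the smallest admissible length, namely $b^{g(m)}$.

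It then remains to sum the resulting geometric series. Since $g(m)=\lceil m/2\rceil$ takes each value $l$ for at most two indices $m$, I would estimate $B_k\le\sum_{m=1}^{k}b^{g(m)}\le 2\sum_{l=1}^{g(k)}b^{l}\le\frac{2b}{b-1}\,b^{g(k)}$. Finally, using $g(k)=\lceil k/2\rceil\le(k+1)/2$, I would bound $\frac{kB_k}{b^{k}}\le\frac{2b}{b-1}\,k\,b^{\,g(k)-k}\le\frac{2b}{b-1}\,k\,b^{-(k-1)/2}$, which tends to $0$ because the exponential decay $b^{-k/2}$ dominates the linear factor $k$.

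I do not expect a genuine obstacle here; the only step requiring care is the bound $R_m\le b^{g(m)}$, where one must use the weighting by $b^{-j}$ correctly and check that the choice $g(k)=\lceil k/2\rceil$ is precisely what pins the cumulative number of run segments to the scale $b^{k/2}$, leaving ample room below $b^{k}$ even after multiplication by $k$.
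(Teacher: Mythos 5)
Your proof is correct and follows essentially the same route as the paper: the bound $R_m\leq b^{g(m)}$ (obtained by weighting each $a_{i,j}$ with $b^{-j}$ and invoking $\sum_{i\geq 1}\{\frac{b-1}{b}p_i\}_{g(m)}\leq \frac{b-1}{b}$) is exactly the paper's estimate $B_\ell-B_{\ell-1}\leq b^{g(\ell)}$, and summing the resulting geometric series against $b^{-k}$ is the same final step. The only differences are cosmetic bookkeeping (you count repetitions of each value of $g$, the paper bounds $b^{g(\ell)}\leq b^{1+\ell/2}$), so nothing further is needed.
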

\begin{proof}
Notice that for every $\ell\geq 2$.
\begin{align*}
\frac{1}{b^{g(\ell)}}(B_{\ell}-B_{\ell-1})&
=\frac{1}{b^{g(\ell)}}\sum_{i\geq 1}\sum_{j=1}^{g(\ell)}a_{i,j} 
\\
&\leq \sum_{i\geq 1}\sum_{j=1}^{g(\ell)}\frac{1}{b^j}a_{i,j}
\\&
\leq \sum_{i\geq 1}\frac{b-1}{b}p_i\\
&\leq 1.
\end{align*}
Recall $g(\ell)=\left\lceil\frac{\ell}{2} \right\rceil$. Notice 
\[
B_k=B_1+\sum\limits_{\ell=2}^k B_{\ell}-B_{\ell-1}.
\]
Then we obtain 
\begin{align*}
\frac{kB_k}{b^k}\leq &\frac{ k\sum\limits_{\ell=2}^{k}b^{g(\ell)}+kB_1}{b^k}
\\&\leq \frac{ k\sum\limits_{\ell=0}^{k}b^{1+\ell/2}+kB_1}{b^k}
\\& \leq \frac{b(b^{1/2}-1)^{-1}k (b^{(k+1)/2}-1)+kB_1}{b^k},
\end{align*}
which converges to $0$ as $k$ goes to infinity. \end{proof}

\begin{remark}
Notice there are many other alternatives for the function~$g$. For instance, any function which satisfies the following conditions is a suitable choice:
\begin{itemize}
    \item $g(k)\leq k/m$ for every $k\in\mathbb{N}$, where $m$ is a constant greater than $1$.
    \item $g(k+1)\leq g(k)+1$ for every $k\in\mathbb{N}$.
    \item $\frac{k}{b^{g(k)}}\xrightarrow{k\rightarrow\infty}0$
\end{itemize}
The first and second condition ensure Lemma~\ref{lemma:blocks} and~\ref{lemma:convergence} still hold, respectively. The third condition guarantees that $\lim_{k\rightarrow\infty}p_i^k=p_i$.
For example, $g(k)=\lceil \sqrt{k}\rceil$ is a possible choice, but $g(k)=\lceil \log_b(k)\rceil$ is not because the last condition fails. 
\end{remark}

We are now ready for  the proof of Theorem~\ref{thm:main}.

\begin{proof}[Proof of Theorem~\ref{thm:main}]

First we consider the case  $i\geq 1$ and we  estimate the value of 
$ \C^\lambda_{i,k}(x)$.
Since we are only interested in the value of  $\C^\lambda_{i,k}(x)$ as $k$ tends to infinity, by Lemma~\ref{lemma:length}, it suffices to count occurrences of words in $x_k$ instead of $x[1...\lfloor\lambda b^k\rfloor]$.

Let us see that the chosen constituent segments up to step $k$ don't have any words of length $k$ in common. By Definition~\ref{def:db}, if $b\geq 3$, each word of length $k$ occurs exactly once in $A_k$  so the claim follows. If $b=2$ and $k$ is odd, then $A[1...2^k]$ is a de Bruijn word of order $k$, so it does not repeat words of length $k$. If $b=2$ and $k$ is even, $A[1...2^{k-1}]$ is a de Bruijn word of order $k-1$, so it does not repeat words of length $k-1$ (hence, it does not repeat words of length~$k$ either). Since up to step $k$ the construction has picked blocks only from $A[1...2^{k-1}]$ (thanks to Lemma~\ref{lemma:base2}), two different constituent segments share no words of length~$k$.

Therefore,  if a constituent segment $w$ of relative 
length $b^{-j}$ with respect to $A_k$ and absolute length $b^{k-j}$ is 
repeated exactly~$i$ times in~$x_k$, we may assume it contributes with 
 $b^{k-j}$
words to the numerator of the counting function $\C^\lambda_{i,k}(x)$, that is, it contributes to  
$\C^\lambda_{i,k}(x)$ with its relative length with respect to $A_k$. To see this, notice that the words of length $k$ that could make the actual count differ from this approximation belong to one of the following 
groups:

\begin{itemize}
    \item[1.] 
    For each run segment corresponding to a constituent segment of length at least $k$, we need to consider words that occur in between the constituent segments inside the run segment, which are at most $k$ different words, and words across the ending of the run segment and the beginning of the next one, which are also at most $k$.
    Then, the number of such words is at most $2k$ for each run segment.
    \item[2.] We also need to consider the number of different words in run segments composed of constituent segments of length $s<k$. In this case, the first $s$ words of length $k$ in the run segment are repeated throughout the segment. There are less than $k$ extra words that occur across the end of the run segment and the beginning of the next one . Thus, there are less than $s+k<2k$ different words for each such run segment.
\end{itemize}
The error introduced by the previous assumption is then bounded by $2kB_k$, which by Lemma~\ref{lemma:blocks} becomes negligible as $k$ tends to infinity.

By the previous approximations, we can estimate $\C^\lambda_{i,k}(x)$ as the sum of the relative lengths of all constituent segments that occur exactly $i$ times in $x_k$, which is $p_i^k$ by Lemma~\ref{lemma:sum}. 
We can then compute the limit of~$\C^\lambda_{i,k}(x)$ as $k$ tends to infinity as $\lim\limits_{k\rightarrow \infty}p_i^k$, which is equal to $p_i$, thanks to Lemma~\ref{lemma:convergence}. 

To conclude, we must consider the case $i=0$. We need to compute \[
\C^\lambda_{0,k}(x)=  \frac{\#\{w \in \Omega^k:
    |x[1...\lfloor\lambda b^k\rfloor+k-1]|_{w} = 0\}}{b^k}.
    \]
The numerator is equal to  the total number of words of length $k$ minus the number of words of length $k$ that occur at least~once. 
Using the same approximations as before, we estimate the ratio as the relative length of the unused portion of $A_k$ after step $k$, that is,
\[
  1-\sum\limits_{i\geq1}p_i^k= p_0^k.   
\]
By Lemma~\ref{lemma:convergence}, we know that
$p_0^k$ converges to $p_0$ as $k$ goes to infinity. This concludes the proof.
\end{proof}

\begin{remark}
\label{rem:lambda}
Our construction solves the problem of exhibiting a $\lambda$-Poisson generic sequence for any fixed  positive $\lambda$ for a $b$-symbol alphabet with $b\geq 3$, and for $\lambda\leq \ln(2)$ in case $b=2$. It does not, however, allow us to generate a Poisson generic sequence. This is because we use an infinite de Bruijn sequence for the construction. Suppose $b\geq 3$ and we construct~$x$ for~$\lambda = 1$. Then the  frequencies for~$\lambda = 1/b$, $i\geq 1$, satisfy,
\[ \lim_{k\rightarrow\infty}  \C^{1/b}_{i,k+1}(x)-\frac{1}{b}
 \C^1_{i,k}(x)
=
0. 
\]
 But this relation does not hold in the case of the probability mass function of the Poisson distribution:
 \[
 e^{-1/b} \frac{1}{b^i i!}\neq
 e^{-1}\frac{1}{ bi!}.
 \]
\end{remark}
In view of Remark~\ref{rem:lambda} it remains unanswered if by using a different sequence $A$ as the source for blocks, our construction could be adapted to produce a Poisson generic sequence, over any alphabet of size $b\geq 2$.

\subsection{On the case  of the two-symbol alphabet }

Infinite de Bruijn  sequences over an alphabet of more than two symbols   satisfy, for every  $k$,
that any two disjoint segments occurring before position  $b^k$
do not share words of length~$k$ nor longer. 
The two-symbol alphabet does  not guarantee this.
Consequently, Theorem~\ref{thm:main}  solves the problem of finding $\lambda$-Poisson generic sequences over the two-symbol alphabet only partially, namely for $\lambda\leq \ln(2)$.
To solve this problem completely it would suffice to use what we may call a \textit{quasi-de Bruijn sequence}, which we define as an infinite sequence $x\in\Omega^{\mathbb{N}}$ that satisfies
\[
\lim_{k\rightarrow\infty}Z_{1,k}^1(x) = 1.
\]
That is, the proportion of words of length $k$ that do not occur exactly once in the prefixes $x[1...2^k+k-1]$ converges to zero.
It is quite immediate to see that we can run the construction of Theorem~\ref{thm:main} 
but using as an input a quasi-de Bruijn sequence in the two-symbol alphabet. 

Observe that  the  infinite  de Bruijn sequences in the two-symbol alphabet  are not necessarily quasi-de Bruijn because although  their  initial segments of length  $2^{2k-1}$ are cyclic de Bruijn words, the initial segments of length  $2^{2k}$ are not.
We do not know of any  construction  in the two-symbol alphabet proved to be quasi-de Bruijn.
There is, however,  empirical evidence
supporting the hypothesis that the Eherenfeucht-Mycielski sequence~\cite{Ehrenfeucht1992} is indeed a quasi-de Bruijn sequence. 
Not much is known about this sequence: as of today, it hasn't even been proven whether the limiting frequencies of zeros and ones are equal to~$1/2$.

\section{Proof of Theorem~\ref{thm:normCriterion}}

The proof of Theorem~\ref{thm:normCriterion} is a simple generalization of  Weiss proof  that $1$-Poisson genericity implies Borel normality~\cite{weiss2020}.
It relies on a classical result due to Pyatetskii-Shapiro in 1951.

\begin{lemma}[\protect{Pyatetskii-Shapiro \cite[Theorem 4.6]{bugeaud}}]
\label{lemma:shapiro}
   Let $\Omega$ be a $b$-symbol alphabet, $b\geq 2$. Let $x\in \Omega^{\mathbb{N}}$. If there exists a positive constant $C$ such that for every $\ell\in\mathbb{N}$ and every word $w\in\Omega^\ell$,
    \[
    \limsup_{N\rightarrow\infty}\frac{|x[1...N]|_w}{N}\leq Cb^{-\ell},
    \]
    then $x$ is Borel normal. 
\end{lemma}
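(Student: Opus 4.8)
The plan is to translate the finitary frequency condition into a statement about shift-invariant measures and then exploit entropy. Let $\sigma$ denote the shift on $\Omega^{\N}$ and, for each $N$, form the empirical measure $\nu_N=\frac1N\sum_{n=0}^{N-1}\delta_{\sigma^n x}$ on the compact space $\Omega^{\N}$. First I would fix any weak-$*$ limit point $\mu$ of the sequence $(\nu_N)$ along a subsequence $N_j\to\infty$; such limit points exist by compactness and are shift-invariant Borel probability measures by the standard Krylov--Bogolyubov averaging argument. Since each cylinder $[w]$ with $|w|=\ell$ is clopen, its indicator is continuous, so $\mu([w])=\lim_j \nu_{N_j}([w])=\lim_j \tfrac1{N_j}\,|x[1...N_j]|_w$, and this limit is at most $\limsup_N |x[1...N]|_w/N\le Cb^{-\ell}$. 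Thus every such $\mu$ satisfies $\mu([w])\le Cb^{-\ell}$ for all words $w$.

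Next I would extract an entropy bound. For every $\ell$ and every $w\in\Omega^\ell$ with $\mu([w])>0$, the inequality $\mu([w])\le Cb^{-\ell}$ gives $-\log\mu([w])\ge \ell\log b-\log C$. Multiplying by $\mu([w])$, summing over $w\in\Omega^\ell$, and using $\sum_{|w|=\ell}\mu([w])=1$ yields, for the block entropy $H_\ell(\mu)=-\sum_{|w|=\ell}\mu([w])\log\mu([w])$, the lower bound $H_\ell(\mu)\ge \ell\log b-\log C$. Dividing by $\ell$ and letting $\ell\to\infty$, the Kolmogorov--Sinai entropy $h(\mu)=\lim_{\ell\to\infty}H_\ell(\mu)/\ell$ (the limit exists by subadditivity of $H_\ell$ for shift-invariant measures) satisfies $h(\mu)\ge \log b$. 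Since $h(\mu)\le\log b$ always holds on the full $b$-shift, we conclude $h(\mu)=\log b$. The crucial point is that the uniform constant $C$ enters only additively and is therefore washed out upon dividing by $\ell$; this is exactly where the hypothesis of a single $C$ valid for all lengths is used.

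Finally, I would invoke that the uniform Bernoulli measure is the unique measure of maximal entropy $\log b$ on the full $b$-shift; hence $\mu$ is that measure and $\mu([w])=b^{-\ell}$ for every $w\in\Omega^\ell$. As this holds for every weak-$*$ limit point, every subsequential limit of $|x[1...N]|_w/N$ equals $b^{-|w|}$, so the whole sequence converges, $\lim_{N\to\infty}|x[1...N]|_w/N=b^{-|w|}$, which is Borel normality. I expect the main obstacle to be this last step, namely justifying that maximal entropy rate forces the uniform measure. One can simply cite intrinsic ergodicity of the full shift, or give a self-contained argument: writing $D_\ell=\ell\log b-H_\ell(\mu)\ge 0$ for the relative entropy of the level-$\ell$ marginal against uniform, the bound $D_\ell\le\log C$ together with stationarity (the increments $D_\ell-D_{\ell-1}=\log b-\big(H_\ell(\mu)-H_{\ell-1}(\mu)\big)$ are non-negative and non-decreasing in $\ell$, since conditional entropy is non-increasing) forces every increment, and hence $D_\ell$ itself, to vanish, so $\mu$ is uniform. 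The remaining steps are routine measure-theoretic bookkeeping.
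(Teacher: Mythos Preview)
The paper does not actually prove this lemma; it is quoted verbatim as a classical result of Pyatetskii--Shapiro and attributed to \cite[Theorem~4.6]{bugeaud}, with no argument given. So there is nothing in the paper to compare your proof against.

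That said, your argument is correct and self-contained. Passing to weak-$*$ limit points of the empirical measures, transferring the cylinder bound $\mu([w])\le Cb^{-\ell}$, and reading off $H_\ell(\mu)\ge \ell\log b-\log C$ is clean; the key observation that the additive $\log C$ disappears after dividing by $\ell$ is exactly the point of the lemma. Your closing argument---that $D_\ell=\ell\log b-H_\ell(\mu)$ is bounded with non-negative, non-decreasing increments (by stationarity and monotonicity of conditional entropy), hence identically zero---is a nice way to avoid quoting intrinsic ergodicity of the full shift as a black box. One small bookkeeping remark: $\nu_N([w])$ and $|x[1\ldots N]|_w/N$ differ by at most $(|w|-1)/N$ because the former counts occurrences that may overhang position $N$; this is harmless for the $\limsup$ but worth a sentence.

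For context, the proofs one usually finds under the Pyatetskii--Shapiro name (including the one in Bugeaud's book) are combinatorial rather than ergodic-theoretic: one shows directly, by a counting/averaging argument over blocks, that the upper-density hypothesis at every length forces exact frequencies. Your entropy route is more conceptual and arguably shorter once the measure-theoretic machinery is in hand; the combinatorial proof is more elementary but longer. Either is acceptable here, and since the paper only cites the result, your version would serve perfectly well as a supplied proof.
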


Let $w$ be a fixed word.  We define the set  $\bad(k, w, \varepsilon)$ as the set of
 words of length~$k$ where the frequency of~$w$ differs from the
expected frequency for Borel normality by more than $\varepsilon$.
\[
  \bad(k, w, \varepsilon) =
  \left\{v\in \Omega^k : \left|\occ{v}{w} - kb^{-|w|} \right| \geq \varepsilon k \right\}
\]
The cardinality of the set $\bad(k, w, \varepsilon)$ has exponential decay in $k$.  This follows from Bernstein's inequality and was proved  in the early  works on Borel normal numbers, such as~\cite{CopelandErdos}
and since then each work computes a similar upper bound.

\begin{lemma}
\label{lemma:HW}
Assume a $b$-symbol alphabet.
Let $k$ and $\ell$ be positive integers and let $\varepsilon$ be such that $6/\lfloor k/\ell\rfloor\leq \varepsilon\leq 1/b^\ell$. Then, for every word $w$ of length $\ell$, 
\[
|Bad(k,w,\varepsilon)| < 
4\ell b^{k+\ell}
e^{-b^{\ell}\varepsilon^2 k/(6\ell)}.
\]
\end{lemma}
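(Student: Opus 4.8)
The plan is to read $|\bad(k,w,\varepsilon)|/b^{k}$ as a probability: if $v$ is drawn uniformly from $\Omega^{k}$, then $|\bad(k,w,\varepsilon)|/b^{k}=\Pr\!\big[\,\big||v|_{w}-kb^{-\ell}\big|\ge\varepsilon k\,\big]$, so it suffices to bound this deviation probability by $4\ell b^{\ell}e^{-b^{\ell}\varepsilon^{2}k/(6\ell)}$. The obstacle is that $|v|_{w}$ is a sum of indicators $\mathbf 1[v[i..i+\ell-1]=w]$ over the $k-\ell+1$ admissible starting positions, and these indicators are \emph{not} independent: two occurrences of $w$ may overlap, and $w$ may even overlap itself (as with $w=00$). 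First I would break this dependence by sorting the starting positions according to the residue of $i$ modulo $\ell$.

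For each of the $\ell$ residue classes $s$ let $N_{s}$ count the occurrences of $w$ beginning at a position $i$ with $i\equiv s\pmod\ell$. The windows contributing to a fixed $N_{s}$ start at positions spaced $\ell$ apart, hence are pairwise disjoint substrings of $v$; under the uniform measure disjoint blocks are independent and each is uniform on $\Omega^{\ell}$, so $N_{s}$ is a binomial variable $\mathrm{Bin}(m_{s},p)$ with $p=b^{-\ell}$ and $m_{s}\le\lfloor k/\ell\rfloor+1$. Thus $|v|_{w}=\sum_{s}N_{s}$ with true mean $\mu=(k-\ell+1)b^{-\ell}$, which differs from the centering value $kb^{-\ell}$ by at most $\ell b^{-\ell}\le\ell$. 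Here I would invoke the first hypothesis $6/\lfloor k/\ell\rfloor\le\varepsilon$, which gives $\varepsilon k\ge 6\ell$ and hence bounds this discrepancy by $\varepsilon k/6$; consequently the $\bad$ event forces $\big||v|_{w}-\mu\big|\ge\tfrac56\varepsilon k$, and by pigeonhole over the $\ell$ phases some $N_{s}$ satisfies $|N_{s}-\mathbb{E}N_{s}|\ge t:=\tfrac{5\varepsilon k}{6\ell}$.

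It then remains to apply Bernstein's inequality to each centered binomial $N_{s}-\mathbb{E}N_{s}$ (whose increments are bounded by $1$ and whose variance is at most $m_{s}p\le(\lfloor k/\ell\rfloor+1)b^{-\ell}$):
\[
\Pr\!\big[|N_{s}-\mathbb{E}N_{s}|\ge t\big]\le 2\exp\!\Big(-\frac{t^{2}/2}{m_{s}p(1-p)+t/3}\Big).
\]
Here the second hypothesis $\varepsilon\le b^{-\ell}$ is exactly what makes the linear term $t/3$ no larger than the variance term, so that (using $k/\ell\ge 6$ to absorb the ``$+1$'' in $m_{s}$) the denominator is of order $k\,b^{-\ell}/\ell$ and the exponent is at least a constant times $b^{\ell}\varepsilon^{2}k/\ell$; a short computation shows the constant comfortably exceeds $1/6$, giving $\Pr[|N_{s}-\mathbb{E}N_{s}|\ge t]\le 2e^{-b^{\ell}\varepsilon^{2}k/(6\ell)}$. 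A union bound over the $\ell$ phases yields $\Pr[\bad]\le 2\ell\,e^{-b^{\ell}\varepsilon^{2}k/(6\ell)}$, and multiplying by $b^{k}$ proves the lemma with room to spare, since the stated prefactor carries an extra factor $2b^{\ell}$.

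The only genuine difficulty is conceptual rather than computational: recognizing that the overlap dependence among occurrences must be broken, and that this is achieved precisely by the residue decomposition, which simultaneously neutralizes self-overlaps of $w$. A secondary point worth care is that Hoeffding's inequality alone would \emph{not} suffice—it ignores the smallness of the variance ($p=b^{-\ell}$) and would lose the factor $b^{\ell}$ in the exponent—so a variance-sensitive bound such as Bernstein's is essential. The two hypotheses on $\varepsilon$ enter, respectively, to control the mean-shift when pigeonholing and to dominate the linear Bernstein term by the variance term; everything else is routine constant-tracking, for which the loose target prefactor leaves ample margin.
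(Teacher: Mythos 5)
Your proof is correct: the paper itself gives no proof of this lemma, saying only that it ``follows from Bernstein's inequality'' as computed in the classical normal-number literature (Copeland--Erd\H{o}s and successors), and your argument --- splitting the occurrence count into $\ell$ residue classes of disjoint, hence independent, length-$\ell$ windows (which also neutralizes self-overlaps of $w$), absorbing the $O(\ell)$ mean shift between $kb^{-\ell}$ and $(k-\ell+1)b^{-\ell}$ via $\varepsilon k\geq 6\ell$, pigeonholing to a single phase, and applying Bernstein with the variance term dominating the linear term thanks to $\varepsilon\leq b^{-\ell}$ --- is precisely that standard route. Your constant-tracking checks out (the exponent constant works out to $25/98>1/6$ with $m_s\leq\lfloor k/\ell\rfloor+1$ and $\lfloor k/\ell\rfloor\geq 12$, which follows from the two hypotheses on $\varepsilon$), and your resulting bound $2\ell b^{k}e^{-b^{\ell}\varepsilon^{2}k/(6\ell)}$ is indeed strictly stronger than the stated one by the factor $2b^{\ell}$.
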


We can now give the awaiting proof.

\begin{proof}[Proof of Theorem~\ref{thm:normCriterion}]
We need to prove that $x$ is Borel normal for the  $b$-symbol alphabet, given that for all $i\in \mathbb{N}_0$,
\[
  \liminf_{k\to\infty} \C^\lambda_{i,k}(x) = p_i,
\] 
\[
\sum_{i\geq 0}ip_i=\lambda.
\]

Fix a positive real number $\varepsilon$.  By hypothesis we know that $\sum_{i \geq 0} ip_i  = \lambda$.
Let $i_0$ be such that
$
  \sum_{i > i_0} ip_i < \frac{\lambda\varepsilon}{2}.
$
It follows that
\[
  \sum_{i=0}^{i_0} ip_i >
  \lambda\left(1 - \frac{\varepsilon}{2}\right).
\]

Let $k_0$ be such that for all $k > k_0$ and $0 \leq i \leq
i_0 $,
\[
  \C^\lambda_{i,k}(x) > p_i - \frac{\lambda\varepsilon}{2i_0^2}.
\]
Consider the positions from $1$ to $\lfloor \lambda b^k\rfloor $ in~$x$.  
We say a position is blamed if the word of length~$k$ starting
at that position occurs more than $i_0$ times in the prefix
$x[1...\lfloor\lambda b^k\rfloor+k-1]$ of~$x$.  For $k>k_0$, we can bound the number of blamed positions
between $1$ and $\lfloor \lambda b^k\rfloor$ in the following way:

\begin{align*}
   \lfloor \lambda b^k\rfloor -  \sum_{i=0}^{i_0} i\ \C^\lambda_{i,k}(x) b^k
     & < \lambda b^k - b^k\sum_{i=0}^{i_0} \left(ip_i - \frac{\lambda\varepsilon}{2i_0}\right) \\
    & < \lambda b^k + \frac{b^k\lambda\varepsilon}{2}-  b^k \sum_{i=0}^{i_0} ip_i \\
    & < \lambda b^k + \frac{b^k\lambda\varepsilon }{2}- b^k \lambda\left(1-\frac{\varepsilon}{2}\right)  \\
    & < \lambda\varepsilon b^k.
\end{align*}
We cover the positions from $1 $ to $\lfloor\lambda b^k\rfloor+k-1$ with
 non-overlapping words of length~$k$ such that no word starts
at a blamed position and every position that is not blamed is covered by exactly one word. We refer to these words as covering words. Notice that a covering word may contain blamed positions as long as they are not the first one.  

Occurrences of~$w$ in  $x[1...\lfloor \lambda b^k\rfloor+k-1]$ fall into one of the following categories:

\begin{itemize}\itemsep0cm
\item occurrences of $w$ starting at a blamed position.  The number of such
  occurrences is bounded by the number of blamed positions, which is at most~$\lambda\varepsilon b^k$.
\item occurrences of $w$  not fully contained in a covering word.  Since there are at most $k^{-1}\lambda b^k$ covering words, the number of these occurrences is bounded by
  $|w|k^{-1}\lambda b^k$.
\item occurrences of $w$ contained in a covering word which is in $\bad(k,w,\varepsilon)$. Each covering word can occur at most $i_0$ times, and can contain at most $k$ occurrences of~$w$. Then there are at most $i_0 k |\bad(k, w,\varepsilon)|$ occurrences of $w$ in this case. Notice that for sufficiently large $k$, $\varepsilon\geq 6/\lfloor k/|w|\rfloor$, so we can use the bound in Lemma~\ref{lemma:HW}.
\item occurrences contained in a covering word which is not in $\bad(k,w,\varepsilon)$. Each such word contains at most  $k b^{-|w|} + \varepsilon k$ occurrences of~$w$.  Since there are at most $k^{-1}\lambda b^k$ covering words, the total number of such occurrences is at most
  $\lambda b^k(b^{-|w|}+\varepsilon)$.
\end{itemize}
Combining all the upper bounds for each category yields the following
upper bound for the number of occurrences of~$w$ in  $x[1..\lfloor \lambda b^k\rfloor+k-1]$,
\[
  \frac{\occ{x[1...\lfloor\lambda b^k\rfloor+k-1]}{w}}{\lambda b^k} \leq \varepsilon + \frac{1}{k}|w|
  + \frac{4i_0 |w|b^{|w|}}{\lambda}ke^{-\varepsilon^2 k b^{|w|}/(6|w|)} + b^{-|w|} + \varepsilon.
\]
Taking limit superior we obtain
\begin{align*}
  \limsup_{k\to\infty}\frac{\occ{x[1...\lfloor\lambda b^k\rfloor+k-1]}{w}}{\lambda b^k} \leq 2\varepsilon+b^{-|w|}.
\end{align*}
Since this holds for every $\varepsilon\leq 1/b^{|w|}$, it follows that
\begin{align*}
  \limsup_{k\to\infty}\frac{\occ{x[1...\lfloor\lambda b^k\rfloor+k-1]}{w}}{\lambda b^k} \leq b^{-|w|}.
\end{align*}

To show that $x$ is Borel normal we apply Lemma~\ref{lemma:shapiro}.
Fix $N$ and let $k$ be such that
$\lambda b^{k-1} \leq N <\lambda b^k$.  Then, using the bounds obtained before,
\[
  \limsup_{N\to \infty}  \frac{\occ{x[1... N]}{w}}{N} 
    \leq \limsup_{n\to \infty}\frac{\occ{x[1...\lfloor\lambda b^{k}\rfloor+k-1]}{w}}{\lambda b^{k-1}}
    \leq b^{1-|w|}.
\]
We conclude that $x$ is Borel normal.
\end{proof}
\bigskip
\bigskip

\noindent
{\bf Acknowledgements.}
The authors are grateful to Zeev Rudnick and to  Benjamin Weiss for their comments on our investigations and  for  having insisted  that we solve this problem.
Gabriel Sac Himelfarb is supported by the student fellowship 
``Beca de Estímulo a las Vocaciones Científicas'' convocatoria 2020,   Consejo Interuniversitario Nacional, Argentina. 
Verónica Becher is supported by Agencia Nacional de Promoci\'on Cient\'ifica y Tecnol\'ogica grant PICT-2018-02315  and by\linebreak Universidad de Buenos Aires grant Ubacyt 20020170100309BA.

\bibliographystyle{plain}
\bibliography{bibliography}

\begin{thebibliography}{10}

\bibitem{abm}
Nicolás Alvarez, Verónica Becher, and Martín Mereb.
\newblock Poisson generic sequences.
\newblock {\em International Mathematics Research Notices}, rnac234, 2022.
\newblock DOI 10.1093/imrn/rnac234.

\bibitem{BecherHeiber}
Ver\'onica Becher and Pablo~Ariel Heiber.
\newblock On extending de {B}ruijn sequences.
\newblock {\em Information Processing Letters}, 111:930--932, 2011.

\bibitem{bugeaud}
Yann Bugeaud.
\newblock {\em Distribution modulo one and {D}iophantine approximation}, volume
  193 of {\em Cambridge Tracts in Mathematics}.
\newblock Cambridge University Press, Cambridge, 2012.

\bibitem{tllucas}
Lucas~Puterman Colomer.
\newblock Very normal numbers, 2019.
\newblock {\em Tesis de Licenciatura en Ciencias de la Computaci\'on}, Facultad
  de Ciencias Exactas y Naturales, Universidad de Buenos Aires.

\bibitem{CopelandErdos}
Arthur~H. Copeland and Paul Erd\"os.
\newblock Note on normal numbers.
\newblock {\em Bulletin American Mathematical Society}, 52:857--860, 1946.

\bibitem{debruijn}
Nicolaas~Gover de~Bruijn.
\newblock A combinatorial problem.
\newblock {\em Indagationes Mathematicae}, 8:461--467, 1946.

\bibitem{Ehrenfeucht1992}
Andrzej Ehrenfeucht and Jan Mycielski.
\newblock A pseudorandom sequence -- {H}ow random is it?
\newblock {\em American Mathematical Monthly}, 99(4):373--375, 1992.

\bibitem{FS}
Philippe Flajolet and Robert Sedgewick.
\newblock {\em Analytic Combinatorics}.
\newblock Cambridge University Press, 2009.

\bibitem{kuipers}
Lauwerens Kuipers and Harald Niederreiter.
\newblock {\em Uniform distribution of sequences}.
\newblock Dover, 2006.

\bibitem{Rudnick}
Zeev Rudnick and Alexandru Zaharescu.
\newblock The distribution of spacings between fractional parts of lacunary
  sequences.
\newblock {\em Forum Mathematicum}, 14(5):691--712, 2002.

\bibitem{weiss2020}
Benjamin Weiss.
\newblock Poisson generic points.
\newblock Jean-Morlet Chair 2020 - Conference: Diophantine Problems,
  Determinism and Randomness, Centre International de Rencontres
  Mathématiques, November 23 to 29, 2020. Audio- visual resource:
  doi:10.24350/CIRM.V.19690103.

\end{thebibliography}

\noindent

\noindent
Ver\'onica Becher \\
 Departamento de  Computaci\'on, Facultad de Ciencias Exactas y Naturales \& ICC,
 Universidad de Buenos Aires \&  CONICET,  Argentina
 \\{\tt  vbecher@dc.uba.ar}
\medskip

\noindent
Gabriel Sac Himelfarb \\
 Departamento de Matem\'atica \& Departamento de Computaci\'on, Facultad de Ciencias Exactas y Naturales,
 Universidad de Buenos Aires, Argentina\\ {\tt gabrielsachimelfarb@gmail.com}
\end{document}